\documentclass[preprint]{imsart}

\RequirePackage{amsthm,amsmath,amsfonts,amssymb}
\RequirePackage[numbers]{natbib}
\RequirePackage[colorlinks,citecolor=blue,urlcolor=blue]{hyperref}
\RequirePackage{graphicx}
\RequirePackage{xcolor}
\RequirePackage{commath}

\startlocaldefs

\newtheorem{theorem}{Theorem}[section]
\newtheorem{proposition}[theorem]{Proposition}

\newtheorem{lemma}[theorem]{Lemma}
\theoremstyle{remark}
\newtheorem{definition}[theorem]{Definition}

\ifx\example\undefined

\fi

\endlocaldefs

\begin{document}
	
	\begin{frontmatter}
		\title{Properties of statistical depth with respect to 
			compact convex random sets. The Tukey depth.}
		\runtitle{Tukey depth with respect to compact convex random sets}
		
		\begin{aug}
			\author[A]{\fnms{LUIS} \snm{GONZ\'ALEZ-DE LA FUENTE}\ead[label=e1]{gdelafuentel@unican.es}},
			\author[A]{\fnms{ALICIA} \snm{NIETO-REYES}\ead[label=e2]{alicia.nieto@unican.es}}
			\and
			\author[B]{\fnms{PEDRO} \snm{TER\'AN}\ead[label=e3]{teranpedro@uniovi.es}}
			\address[A]{Departamento de Matem\'aticas, Estad\'istica y Computaci\'on,
				Universidad de Cantabria (Spain),
			}
			
			\address[B]{Departamento de Estad\'istica e Investigaci\'on Operativa y Did\'actica de las Matem\'aticas,
				Universidad de Oviedo (Spain),
			}
		\end{aug}
		
		\begin{abstract}
			 We study a statistical data depth with respect to 
			compact convex random sets which is consistent with the multivariate Tukey depth and the Tukey depth for fuzzy sets.
			In doing so, we provide a series of properties 
			for  statistical data depth with respect to 
			compact convex random sets. These properties are an adaptation of properties that constitute the axiomatic notions of  multivariate, functional and fuzzy depth functions and other well-known properties of depth.	\end{abstract}
		
		\if0
		...........
		\begin{keyword}[class=MSC2010]
			\kwd[Primary ]{94D05}
			\kwd{62G99}
			\kwd[; secondary ]{62G30}
		\end{keyword}
		..........
		\fi
		
		\begin{keyword}
			\kwd{Compact convex set}
			\kwd{Halfspace depth}
			\kwd{Statistical depth function}
			\kwd{Symmetry}
		\end{keyword}
		
	\end{frontmatter}
	
	\section{Introduction}
	In some real cases  statistical data is
	in the form of sets, for instance, in the form of compact convex sets. Examples can be found in data sets related with health, such as the range of blood pressure over a day \citep{GilBlood}, or related with sport measures,  such as the range of weight and height of a soccer team \citep{soccer}.
	This type of statistical data is studied by the theory of random sets,  which, from the statistical point of view,  models observed phenomena that are sets rather than  points in $\mathbb{R}^p$, as in multivariate statistics, or functions as in functional data analysis.
	Thus, a random set is a generalization of a random variable: it is a set-valued random variable. A random set can also be understood as a simplification of a fuzzy random variable, as the $\alpha$-levels of a fuzzy set are nested compact sets.
	The literature about random sets contains well stablished theoretical results \citep{molchanov}, some of them being generalizations to random sets of classical statistical results, for instance, the strong law of large numbers \citep{arstein}.
	Statistical methods are also part of the development of  the area of compact convex random sets, such as proposing linear regression methods  \citep{gonzalezcolubi} or the median of a random interval \citep{sinovamedian}.
	Recent literature also includes theoretical results, as on the intersection of random sets \citep{intersect}, and applications, as on underwater sonar images \citep{Pen}.

	Statistical depth functions have become a very useful tool in nonparametric statistics.
	Nowadays  depth functions are applied in different fields of statistics such as clustering and classification  \citep{clustering} or real data analysis
	\citep{aplicacion1, aplicacion2}.
	Giving a distribution $\mathbb{P}$
	on a space, a depth function, $D(\cdot;\mathbb{P})$, orders the elements in the space with respect to  $\mathbb{P}$. Roughly speaking, statistical depth functions measure how close  an element is to a data cloud, in the sense that if we move the element to the \emph{center} of the cloud, its depth increases and if we move it out of the \emph{center}, its depth decreases.
	Assuming it is unique, this center is the center of symmetry, if the distribution is symmetric for a particular notion of symmetry.
	For multivariate spaces, there are notions of symmetry widely used in the literature: central, angular \citep{LiuSimplicial} and halfspace symmetry \citep{ZuoSerfling}. Notions of symmetry specific for functional \citep{NietoBatteyJMVA} and fuzzy spaces \citep{primerarticulo} are, however, quite recent.

	Formally, an axiomatic definition of  depth function for the multivariate case was proposed by \citet{ZuoSerfling}. According to it, a depth function, $D(\cdot;\mathbb{P})$, satisfies the following properties. To introduce them, let $X$ be a random variable with distribution $\mathbb{P}$ on $\mathbb{R}^{n},$  $\mathcal{M}_{n\times n}(\mathbb{R})$ the space of $n\times n$ matrices with entries in $\mathbb{R}$ and  $\|\cdot\|$ the Euclidean norm. Abusing the notation, we indistinctly write  $D(\cdot;X)$ and $D(\cdot;\mathbb{P}).$
	\begin{itemize}
		\item[M1.] Affine invariance.  A depth function does not depend on the coordinate system, that is, for any non-singular $M\in\mathcal{M}_{n\times n}(\mathbb{R})$   and $b\in\mathbb{R}^{n}$,  $D(x;X) = D(Mx + b; MX + b)$.
		
		\item[M2.] Maximality at center. If the distribution $\mathbb{P}$ has a uniquely defined center of symmetry, for certain notion of symmetry,  $D(\cdot;X)$ is maximized at it.
		
		\item[M3.] Monotonicity relative to the deepest point. Let $x_{0}\in\mathbb{R}^{n}$ be a point of maximal depth. Then, for any $x\in\mathbb{R}^{n}$, $D((1-\lambda)x_{0} + \lambda x;X)\geq D(x;X)$ for all $\lambda\in [0,1]$.
		
		\item[M4.] Vanishing at infinity. The limit of $D(x;X)$ goes to $0$ as the limit of $\|x\|$ goes to infinity.
	\end{itemize}
	Formal axiomatic definitions of depth function have been later provided in the  functional \citep{NietoBattey} and fuzzy settings \citep{primerarticulo, CaptFN}.
	
	The first instance of depth function was proposed  prior to the, axiomatic, definitions. It was an instance provided in 1975 by \citet{tukey} for multivariate data and is still nowadays the most well-known one. It  is also known as halfspace depth, as it  computes the infimum of the probabilities of closed halfspaces which contain the point at which the depth function is evaluated. That is,
	\begin{equation}\label{HD}
		HD(x;\mathbb{P}) := \inf\{\mathbb{P}(H) : \text{ H is a closed halfspace and } x\in H\}.
	\end{equation}
	\citet{ZuoSerfling}  proved that $HD$ satisfies M1-M4 and, therefore, it is a statistical depth function.
	We emphasize the satisfaction of the axioms because it is customary in the statistical depth community to not consider the axioms as cut-off, regarding a function as a depth function even when all the axioms are not satisfied in their entirety.
	
	Since Tukey coined the term in 1975,
	many other instances of depth functions have been proposed and their use in nonparametric statistics has grown considerably.
	Some commonly used
	are the simplicial depth, proposed by \citet{LiuSimplicial},  the spatial depth, proposed by \citet{SerflingSpatial} and the random Tukey depth,  proposed by \citet{CuestaNieto}, which, being based on random projections, is a computationally effective approximation of the Tukey depth.  The spatial and random Tukey depth functions can be applied in both multivariate and functional spaces \citep{SpatIndInf, CuestaNietoMieres}. However, the random Tukey depth does not satisfy the axiomatic definition of a functional depth \citep{NietoBattey}, which only the metric depth \citep{NietoBatteyJMVA} has yet been proved to satisfy. It is worth noticing that the spatial and  random Tukey  depth functions were introduced before the functional axiomatic definition.
	Furthermore, while the Tukey depth has not yet being defined in functional spaces, it has being generalized to the fuzzy setting and proved to satisfy the  axiomatic definitions in that setting  \citep{primerarticulo, CaptFT}.

	The aim of this paper is to
	propose some desirable properties of depth with respect to compact convex random sets, which can be considered as an axiomatic definition for this setting. Some of these properties are an adaptation for compact convex sets of those proposed in \citet{primerarticulo} for fuzzy data.
	The properties are also largely inspired by the multivariate definition  \citep{ZuoSerfling}, and, in addition, by the functional one \citep{NietoBattey}, because  the set of compact convex sets can be considered as a metric space by using the Hausdorff distance, for instance.
	In order to test the viability of those properties, with a generalization of halfspaces suitable for the space of compact convex sets, we present an adaptation of Tukey depth and show that almost all of them are satisfied. These definitions  of  halfspace and Tukey depth can be regarded to stem naturally from their corresponding multivariate definitions and, in addition, are  a particular case
	of their fuzzy analogs  \citep{primerarticulo}. Furthermore, we show that   the definition of Tukey depth with respect compact convex random sets coincides with that derived recently in \citet{Cascos}, which does not make an explicit use  of halfspaces in its definition. Moreover, we also show that  the Tukey depth with respect to compact convex random sets can  be rewritten in terms of the multivariate halfspace depth of the support function of compact convex sets.

	The paper is organized as follows.
	Background about compact convex random sets is contained in Section \ref{preliminaries}. The definition of the Tukey depth with respect compact convex random sets   is in Section \ref{defTukey}, together with its relations and equivalences to other definitions. Section \ref{properties} presents and studies the properties of depth with respect to compact convex random sets and their satisfaction by the Tukey depth with respect compact convex random sets. The paper concludes with some final remarks in Section \ref{remarks}.
	
	\section{Preliminaries on compact convex random sets.}\label{preliminaries}
	
	Let us denote by $\mathcal{K}_{c}(\mathbb{R}^{p})$ the set of non-empty
	compact convex sets of $\mathbb{R}^{p}$. 
	In the case $p = 1$, the elements of $\mathcal{K}_{c}(\mathbb{R})$ are intervals of the form $[a,b]$ with $a\leq b$.
	For any $K\in\mathcal{K}_{c}(\mathbb{R}^{p}),$ its support function $s_{K} : \mathbb{S}^{p-1}\rightarrow\mathbb{R}$ is defined by
	\begin{equation}\nonumber
		s_{K}(u) := \sup_{k\in K}\langle k,u\rangle,
	\end{equation}
	where $\langle\cdot ,\cdot\rangle$ denotes the usual dot product,   $\mathbb{S}^{p-1} := \{x\in\mathbb{R}^{p} : \|x\| = 1\}$ is the unit sphere, and $\|\cdot\|$ is the Euclidean norm.
	
	Let $(\Omega,\mathcal{A},\mathbb{P})$ be a probability space. A map $$\Gamma : \Omega\rightarrow\mathcal{K}_{c}(\mathbb{R}^{p})$$ is called a \textit{compact convex random set} if $$\{\omega\in\Omega : \Gamma(\omega)\cap K\neq\emptyset\}\in\mathcal{A}$$ for all $K\in\mathcal{K}_{c}(\mathbb{R}^{p})$  \cite{randomsetsdefinition}.
	\citet{randomsetstheorem} proved the Fundamental Measurability Theorem, which is useful to prove that
	$s_{\Gamma}(u)$ is a real random variable for all $u\in\mathbb{S}^{p-1}$.
	As in the  Euclidean space, in $\mathcal{K}_{c}(\mathbb{R}^{p})$ there exists a predominant distance, the Hausdorff metric. The Hausdorff distance between $K\in\mathcal{K}_{c}(\mathbb{R}^{p})$ and $L\in\mathcal{K}_{c}(\mathbb{R}^{p})$ is
	\begin{equation}\nonumber
		d_{\mathcal{H}}(K,L) := \max\{\sup_{k\in K}\inf_{l\in L} \|k-l\|, \sup_{l\in L}\inf_{k\in K}\|k-l\|\},
	\end{equation}
	which  can be expressed in terms of their support function (e.g., \cite{Bonnensen}) as
	\begin{equation}\label{ecuacionHausdorff}
		d_{\mathcal{H}}(K,L) = \sup_{u\in\mathbb{S}^{p-1}} |s_{K}(u) - s_{L}(u)|.
	\end{equation}
	Borel measurability with respect to $d_{\mathcal H}$ is equivalent to the above-mentioned definition of compact convex random sets.
	
	Some properties of support functions of elements of $\mathcal{K}_{c}(\mathbb{R}^{p})$ can be deduced from properties of the supremum function. For instance, let $K,L\in\mathcal{K}_{c}(\mathbb{R}^{p})$, taking into account that $$K + L = \{k+l : k\in K, l\in L\}\in\mathcal{K}_{c}(\mathbb{R}^{p}),$$ we have that the support function of $K + L$ can be expressed as the sum of the support functions of $K$ and $L$, that is, $$s_{K+L}(u) = s_{K}(u) + s_{L}(u)$$ for all $u\in\mathbb{S}^{p-1}$. It is also possible to define the product of $K$ by a scalar $\gamma\in\mathbb{R}^{+}$, as $$\gamma\cdot K = \{\gamma k : k\in K\}.$$ Then, it is clear that $$s_{\gamma\cdot K}(u) = \gamma \cdot s_{K}(u)$$ for all $u\in\mathbb{S}^{p-1}$.
	
	\section{Halfspaces and halfspace depth  in $\mathcal{K}_{c}(\mathbb{R}^{p})$}\label{defTukey}
	
	As observable from \eqref{HD}, the Tukey depth of a multivariate point $x$ is the infimum of the probability of halfspaces which contain $x$. But $\mathcal{K}_{c}(\mathbb{R}^{p})$ is not a linear space.   In this section, we define generalized halfspaces (called simply halfspaces in the sequel) for $\mathcal{K}_{c}(\mathbb{R}^{p})$ in a natural way from the multivariate case.
	
	Let $S$ be a halfspace of $\mathbb{R}^{n}$. Then, there exists $v\in\mathbb{R}^{n}$ and $b\in\mathbb{R}$ such that $$S = \{y\in\mathbb{R}^{n} : v^{T}y\leq b \}.$$ Taking $u = (1/\|v\|)v\in\mathbb{S}^{p-1}$ and $c = b/\|v\|$, it is clear that $$S = \{y\in\mathbb{R}^{n} : u^{T} y\leq c \}.$$ Thus, halfspaces of $\mathbb{R}^{n}$ can be viewed as subsets $S_{u,c}\subseteq\mathbb{R}^{n}$ such that $$S_{u,c} = \{y\in\mathbb{R}^{n} : u^{T} y\leq c \}$$ with $u\in\mathbb{S}^{p-1}$ and $c\in\mathbb{R}$.
	This generalizes naturally to $\mathcal{K}_{c}(\mathbb{R}^{p})$ by using the support function of a set.
	Thus, we define halfspaces $S_{u,t}^{-}, S_{u,t}^{+}\subseteq\mathcal{K}_{c}(\mathbb{R}^{p})$ as
	\begin{equation}\label{i}
		S_{u,t}^{-} := \{K\in\mathcal{K}_{c}(\mathbb{R}^{p}) : s_{K}(u)\leq t \},
	\end{equation}
	\begin{equation}\label{d}
		S_{u,t}^{+} := \{K\in\mathcal{K}_{c}(\mathbb{R}^{p}) : s_{K}(u)\geq t \},
	\end{equation}
	for all $u\in\mathbb{S}^{p-1}$ and $t\in\mathbb{R}$. We explicitly consider both halfspaces because
	$$s_{K}(-u) = -\inf_{k\in K}\langle u,k\rangle\neq -s_K(u)$$
	with
	\begin{equation}
		\begin{aligned}\nonumber
			S_{u,t}^{+}&\subseteq S_{-u,-t}^{-}, \\ \nonumber
			S_{u,t}^{-}&\subseteq S_{-u,-t}^{+},
		\end{aligned}
	\end{equation}
	for all $u\in\mathbb{S}^{p-1}$ and $t\in\mathbb{R}$.
	
	Making use of  both directions of the inequality that defines the halfspaces,
	the Tukey depth with respect to a compact convex random set can be defined.
	Let $\Gamma$ be a compact convex random set. \textit{The Tukey depth of} $K\in\mathcal{K}_{c}(\mathbb{R}^{p})$\textit{ with respect to} $\Gamma$ is defined by the function $$D_{CT}(\cdot;\Gamma) : \mathcal{K}_{c}(\mathbb{R}^{p})\rightarrow [0,1]$$ given by
	\begin{equation}\label{definicionTukeyCompacto}
		D_{CT}(K;\Gamma) := \min\{\inf_{\underset{K\in S_{u,t}^{-}}{u\in\mathbb{S}^{p-1},t\in\mathbb{R} :}}\mathbb{P}(\Gamma\in S_{u,t}^{-}), \inf_{\underset{K\in S_{u,t}^{+}}{u\in\mathbb{S}^{p-1}, t\in\mathbb{R} :}}\mathbb{P}(\Gamma\in S_{u,t}^{+}) \}.
	\end{equation}
	We indistinctively refer to it as Tukey depth for compact convex random sets or Tukey depth with respect to compact convex random sets.
	It is worth noticing that \eqref{definicionTukeyCompacto} is a particularization
	for compact convex sets of the Tukey depth for fuzzy sets proposed in \cite{primerarticulo}, as well as \eqref{i} and \eqref{d} are of the fuzzy halfspaces proposed there.
	
	In what follows, we operate on  \eqref{definicionTukeyCompacto} to show it coincides with the definition of half-space  depth with respect to compact convex random sets
	provided in  \citet{Cascos}, which does not explicitly use halfspaces.
	By \eqref{i},
	$K\in S_{u,t}^{-}$ means that $(u,t)$ is a pair such that $s_{K}(u)\leq t$. Thus
	$$S_{u,s_{K}(u)}^{-}\subseteq S_{u,t}^{-}$$
	and, consequently,
	$$\mathbb{P}(\Gamma\in S_{u,s_{K}(u)}^{-})\leq\mathbb{P}(\Gamma\in S_{u,t}^{-}).$$
	Analogously, by \eqref{d},
	$$\mathbb{P}(\Gamma\in S_{u,s_{K}(u)}^{+})\leq\mathbb{P}(\Gamma\in S_{u,t}^{+}).$$
	Taking the infimum in \eqref{definicionTukeyCompacto}, we can express $D_{CT}$ as
	\begin{equation*}
		D_{CT}(K;\Gamma) = \min\{\inf_{u\in\mathbb{S}^{p-1}}\mathbb{P}(\Gamma\in S_{u,s_{K}(u)}^{-}), \inf_{u\in\mathbb{S}^{p-1}}\mathbb{P}(\Gamma\in S_{u,s_{K}(u)}^{+})\}.
	\end{equation*}
	Making use of the definition of the halfspaces in \eqref{i} and \eqref{d}, we have
	\begin{equation}\label{expresionTukey2}
		D_{CT}(K;\Gamma) = \min\{\inf_{u\in\mathbb{S}^{p-1}}\mathbb{P}(s_{\Gamma}(u)\leq s_{K}(u)), \inf_{u\in\mathbb{S}^{p-1}}\mathbb{P}(s_{\Gamma}(u)\geq s_{K}(u))\},
	\end{equation}
	which coincides with the definition of the \textit{half-space depth} proposed by  \citet{Cascos}.
	
	Interchanging the minimum and  infimum in  \eqref{expresionTukey2},
	\begin{equation}\label{df}
		D_{CT}(K;\Gamma) = \inf_{u\in\mathbb{S}^{p-1}} \min\{\mathbb{P}(s_{\Gamma}(u)\leq s_{K}(u)),\mathbb{P}(s_{\Gamma}(u)\geq s_{K}(u))\}.
	\end{equation}
	Then, taking into account \eqref{HD}, we can express the Tukey depth for compact convex random sets in terms of the multivariate halfspace depth in the following way
	\begin{equation}\label{Tukeyhalfspace}
		D_{CT}(K;\Gamma) = \inf_{u\in\mathbb{S}^{p-1}} HD(s_{K}(u);s_{\Gamma}(u)).
	\end{equation}

\subsection{Sample halfspace depth}
We define the sample version $D_{CT,n}$ of the Tukey depth for compact convex sets. Let $$\Gamma : \Omega\rightarrow\mathcal{K}_{c}(\mathbb{R}^{p})$$ be a compact convex random set associated with the probabilistic space $(\Omega,\mathcal{A},\mathbb{P})$  and $X_{1},\ldots , X_{n}$ independent random sets distributed as $\Gamma$.
We define the sample version of the Tukey depth, $D_{CT,n}$ as
\begin{equation}\label{n}
	D_{CT,n}(K;\Gamma) := \min\{\inf_{u\in\mathbb{S}^{p-1}}\mathbb{P}_{n}^{u}((-\infty, s_{K}(u)]), \inf_{u\in\mathbb{S}^{p-1}}\mathbb{P}_{n}^{u}([s_{K}(u), \infty))\},
\end{equation}
for every $K\in\mathcal{K}_{c}(\mathbb{R}^{p})$, where
\begin{equation}
	\begin{aligned}\nonumber
		\mathbb{P}_{n}^{u}((-\infty, x]) &= \cfrac{1}{n}\cdot\sum_{i = 1}^{n}\text{I}(s_{X_{i}}(u)\in (-\infty,x]), \\ \nonumber
		\mathbb{P}_{n}^{u}([x, \infty)) &= \cfrac{1}{n}\cdot\sum_{i = 1}^{n}\text{I}(s_{X_{i}}(u)\in [x, \infty)),
	\end{aligned}
\end{equation}
for all $u\in\mathbb{S}^{p-1}$ and $x\in\mathbb{R}$. The function $D_{CT,n}$ coincides with the sample version of the \textit{half-space depth} proposed by \citet{Cascos}. Interchanging the minimum and infimum in  \eqref{n}, we also have that
\begin{equation}\label{n1}
	D_{CT,n}(K;\Gamma) := \inf_{u\in\mathbb{S}^{p-1}}\min\{\mathbb{P}_{n}^{u}((-\infty, s_{K}(u)]), \mathbb{P}_{n}^{u}([s_{K}(u), \infty))\}.
\end{equation}

\section{Properties of depth for compact convex sets}\label{properties}
In this section we propose some desirable properties for a depth for compact convex sets.
They are
mainly based on the properties that constitute the  notion  of depth function for multivariate spaces \citep{ZuoSerfling},   for  functional (metric) spaces  \citep{NietoBattey} and for the fuzzy setting \citep{primerarticulo}. Furthermore, we study whether $D_{CT}$ satisfies them.

Some of these properties parallel the ones considered in \cite{primerarticulo} and in certain cases they follow for a random set $\Gamma$ by applying the corresponding property in \cite{primerarticulo} to the indicator function $I_\Gamma$. However, this application is simplest for the properties whose direct proof is already very simple, which does not support the cost-effectiveness of doing so. In the longer proofs, additional arguments are needed due, for instance, to the subtlety that the deepest point in the (larger) space of fuzzy sets might conceivably be deeper than the deepest non-fuzzy set. Therefore, the properties referring to deepest points are parallel in wording but might potentially have different content. It can be proved that this does not actually happen; but we also found that direct proofs make the paper more self-contained. Thus we opted for proofs which do not require the reader to be familiar with the specifics of fuzzy sets, by adapting the arguments in \cite{primerarticulo}. Still, some other properties in this section were not considered in \cite{primerarticulo}.

\subsection{Property I. Affine invariance.} We focus on the M1. property of the multivariate case reported in the introduction. In the case of $\mathcal{K}_{c}(\mathbb{R}^{p})$, the product of $M\in\mathcal{M}_{n\times n}(\mathbb{R})$ times $K\in\mathcal{K}_{c}(\mathbb{R}^{p})$ is defined as the compact convex set
\begin{eqnarray}\label{M}
	M\cdot K = \{M\cdot k : k\in K\}.
\end{eqnarray}
The affine invariance property that we propose is the following.

\begin{itemize}
	\item[\bf (P1.)]  \textit{ Let $\Gamma$ be a compact convex random set, $D(\cdot;\Gamma) : \mathcal{K}_{c}(\mathbb{R}^{p})\rightarrow [0,\infty)$ a function. Then,
		\begin{equation}\nonumber
			D(M\cdot K + L; M\cdot\Gamma + L) = D(K;\Gamma),
		\end{equation}
		for all $M\in\mathcal{M}_{n\times n}(\mathbb{R})$ non-singular matrix and any $K,L\in\mathcal{K}_{c}(\mathbb{R}^{p})$.}
\end{itemize}

Thus, this property is analogous to the multivariate case. The property in the fuzzy case is different only in that we need the \textit{Zadeh's extension principle}  \cite{extesion} to apply a matrix to a fuzzy set. The property for functional data also differs, since \cite{NietoBattey} demands isometry invariance. However, note that in this context affine invariance actually implies isometry invariance since, by a result of \citet{Gruber}, all isometries of $\mathcal{K}_{c}(\mathbb{R}^{p})$ are of the form $K\mapsto M\cdot K + L$ with $M$ orthogonal.

\begin{proposition}\label{TukeyAffine}
	The function $D_{CT}$ satisfies P1.
\end{proposition}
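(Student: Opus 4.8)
The plan is to reduce everything to the support-function representation \eqref{df} of $D_{CT}$, and to track how the support function behaves under the affine map $K\mapsto M\cdot K + L$. The key identity is
\begin{equation*}
	s_{M\cdot K + L}(u) = s_{M\cdot K}(u) + s_L(u) = s_K(M^{T}u) + s_L(u),
\end{equation*}
valid for every $u\in\mathbb{S}^{p-1}$, where $s_K$ is understood as the positively homogeneous extension of the support function to $\mathbb{R}^{p}$. The first equality is additivity under Minkowski sums, recalled in Section \ref{preliminaries}, and the second follows from $\langle Mk,u\rangle=\langle k,M^{T}u\rangle$ upon taking the supremum over $k\in K$. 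Applied pointwise on $\Omega$, the same identity yields the real random variable $s_{M\cdot\Gamma+L}(u)=s_\Gamma(M^{T}u)+s_L(u)$.

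Next I would substitute into the two probabilities inside \eqref{df}. Since the deterministic summand $s_L(u)$ appears on both sides of each inequality it cancels, so the events become $\{s_\Gamma(M^{T}u)\le s_K(M^{T}u)\}$ and $\{s_\Gamma(M^{T}u)\ge s_K(M^{T}u)\}$. As $M$ is non-singular, $M^{T}u\neq 0$; writing $w=M^{T}u/\|M^{T}u\|\in\mathbb{S}^{p-1}$ and using positive homogeneity gives $s_\Gamma(M^{T}u)=\|M^{T}u\|\,s_\Gamma(w)$ and $s_K(M^{T}u)=\|M^{T}u\|\,s_K(w)$. Dividing by the positive scalar $\|M^{T}u\|$ preserves both inequalities, hence
\begin{equation*}
	\min\{\mathbb{P}(s_{M\cdot\Gamma+L}(u)\le s_{M\cdot K+L}(u)),\,\mathbb{P}(s_{M\cdot\Gamma+L}(u)\ge s_{M\cdot K+L}(u))\} = \min\{\mathbb{P}(s_\Gamma(w)\le s_K(w)),\,\mathbb{P}(s_\Gamma(w)\ge s_K(w))\}.
\end{equation*}

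Finally I would note that the infimum over $u\in\mathbb{S}^{p-1}$ of the left-hand side equals the infimum over $w\in\mathbb{S}^{p-1}$ of the right-hand side, because the map $\phi(u)=M^{T}u/\|M^{T}u\|$ is a bijection of $\mathbb{S}^{p-1}$ onto itself, with inverse $w\mapsto(M^{T})^{-1}w/\|(M^{T})^{-1}w\|$, non-singularity of $M$ being precisely what makes $\phi$ well defined and surjective. Substituting into \eqref{df} then gives $D_{CT}(M\cdot K+L;M\cdot\Gamma+L)=D_{CT}(K;\Gamma)$. I expect the only delicate point, rather than a genuine obstacle, to be that $M^{T}u$ is not generally a unit vector, which is what forces the homogeneous extension of $s_K$ and the reparametrization $\phi$ of the sphere; once the positive factor $\|M^{T}u\|$ is cancelled the argument is routine. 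It is also worth checking in passing that $M\cdot\Gamma+L$ is again a compact convex random set, so that the probabilities are well defined; this holds because $M$ is linear and continuous and measurability is preserved, as guaranteed by the Fundamental Measurability Theorem recalled in Section \ref{preliminaries}.
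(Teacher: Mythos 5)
Your proposal is correct and follows essentially the same route as the paper's own proof: the identity $s_{M\cdot K}(u)=s_K(M^{T}u)=\|M^{T}u\|\,s_K(M^{T}u/\|M^{T}u\|)$ is exactly the paper's Lemma \ref{lemaesferas} (phrased via the homogeneous extension), followed by cancellation of $s_L(u)$, cancellation of the positive factor $\|M^{T}u\|$, and the bijectivity of $u\mapsto M^{T}u/\|M^{T}u\|$ on $\mathbb{S}^{p-1}$ to reindex the infimum. The only differences are cosmetic: you work from \eqref{df} rather than \eqref{expresionTukey2}, treat both terms of the minimum at once, and add a (welcome but not essential) remark on measurability of $M\cdot\Gamma+L$.
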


The following lemma (cf. \cite[Proposition 8.2]{primerarticulo}) is used to prove Proposition \ref{TukeyAffine}.

\begin{lemma}\label{lemaesferas}
	Let $K\in\mathcal{K}_{c}(\mathbb{R}^{p})$ and $M\in\mathcal{M}_{n\times n}(\mathbb{R})$ a non-singular matrix. Then, $$s_{M\cdot K}(u) = \|M^{T}\cdot u\|\cdot s_{K}((1/(\|M^{T}\cdot u\|))\cdot M^{T}\cdot u)$$ for all $u\in\mathbb{S}^{p-1}$.
\end{lemma}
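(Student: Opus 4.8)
The plan is to compute $s_{M\cdot K}(u)$ directly from the definition of the support function and reduce it, via the adjoint identity for the dot product, to an evaluation of $s_{K}$ in a (generally non-unit) direction, and then restore a unit argument by positive homogeneity. First I would write, using \eqref{M} and the definition of the support function,
\begin{equation*}
	s_{M\cdot K}(u) = \sup_{k\in K}\langle M\cdot k, u\rangle.
\end{equation*}
The defining property of the transpose, $\langle M\cdot k, u\rangle = \langle k, M^{T}\cdot u\rangle$, then lets me rewrite this as $\sup_{k\in K}\langle k, M^{T}\cdot u\rangle$, which is precisely the supremum defining a support function, now taken in the direction $M^{T}\cdot u$.

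The only subtlety is that, as defined in Section \ref{preliminaries}, the support function lives on $\mathbb{S}^{p-1}$, whereas $M^{T}\cdot u$ need not be a unit vector. Since $M$ is non-singular and $u\neq 0$ (being a unit vector), we have $M^{T}\cdot u\neq 0$, so $\|M^{T}\cdot u\| > 0$ and the vector $w := (1/\|M^{T}\cdot u\|)\cdot M^{T}\cdot u$ lies on $\mathbb{S}^{p-1}$. I would then factor the positive scalar out of the dot product:
\begin{equation*}
	\sup_{k\in K}\langle k, M^{T}\cdot u\rangle = \|M^{T}\cdot u\|\cdot\sup_{k\in K}\langle k, w\rangle = \|M^{T}\cdot u\|\cdot s_{K}(w),
\end{equation*}
using that scaling a direction by a positive constant scales every dot product, and hence the supremum, by that same constant (positive homogeneity of the support function). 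Substituting the definition of $w$ reproduces exactly the claimed identity.

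There is essentially no hard step here; the computation is a routine manipulation of a supremum. The one point demanding care --- and the reason the hypothesis that $M$ is non-singular is needed --- is guaranteeing $M^{T}\cdot u\neq 0$, so that dividing by $\|M^{T}\cdot u\|$ is legitimate and the resulting direction genuinely lies on the unit sphere where $s_{K}$ is defined. Were $M$ singular, one could have $M^{T}\cdot u = 0$ and the right-hand side would be ill-defined; thus non-singularity is exactly what keeps every expression in the statement meaningful.
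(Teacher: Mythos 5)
Your proof is correct and follows essentially the same route as the paper's: apply the definition of the support function, move $M$ across the dot product via the transpose identity, and normalize $M^{T}\cdot u$ using positive homogeneity of the supremum. Your explicit remark that non-singularity guarantees $M^{T}\cdot u\neq 0$ (so the normalization is legitimate) is a point the paper leaves implicit, and is a worthwhile addition.
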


\begin{proof}
	Taking into account \eqref{M}, it is clear that
	\begin{equation}\nonumber
		s_{M\cdot K}(u) = \sup_{v\in M\cdot K} \langle u,v\rangle = \sup_{k\in K} \langle u, M\cdot k\rangle = \sup_{k\in K} \langle M^{T}\cdot u, k\rangle,
	\end{equation}
	for any $u\in\mathbb{S}^{p-1}$. In general, $M^{T}\cdot u$ does not belong to $\mathbb{S}^{p-1}$. Thus, normalizing it, we have that
	\begin{equation}
		\begin{aligned}\nonumber
			s_{M\cdot K}(u) = &\sup_{k\in K}\langle \|M^{T}\cdot u\|\cdot\cfrac{1}{\|M^{T}u\|}\cdot M^{T}\cdot u, k\rangle = \\ \nonumber
			& \|M^{T}\cdot u\|\cdot\sup_{k\in K}\langle \cfrac{1}{\|M^{T}\cdot u\|}\cdot M^{T}\cdot u,k\rangle = \|M^{T}\cdot u\|\cdot s_{K}(\cfrac{1}{\|M^{T}\cdot u\|}\cdot M^{T}\cdot u).
		\end{aligned}
	\end{equation}
\end{proof}

It is clear that, if $M\in\mathcal{M}_{n\times n}(\mathbb{R})$ is a non-singular matrix, the map $$f : \mathbb{S}^{p-1}\rightarrow\mathbb{S}^{p-1}$$ defined by $$f(u) = (1/\|M^{T}\cdot u\|)\cdot M^{T}\cdot u$$ is bijective. We make use of this to prove Proposition \ref{TukeyAffine}.

\begin{proof}[Proof of Proposition \ref{TukeyAffine}]
	Using properties of the support function and Lemma \ref{lemaesferas}, we get
	\begin{equation}\nonumber
		s_{M\cdot K + L}(u) = \|M^{T}\cdot u\|\cdot s_{K}(\cfrac{1}{\|M^{T}\cdot u\|}\cdot M^{T}\cdot u) + s_{L}(u),
	\end{equation}
	for all $u\in\mathbb{S}^{p-1}$. By  \eqref{expresionTukey2}, we have that
	\begin{equation}
		\begin{aligned}
			&\inf_{u\in\mathbb{S}^{p-1}}\mathbb{P}(s_{M\cdot\Gamma + L}(u)\leq s_{M\cdot K + L}(u)) = \inf_{u\in\mathbb{S}^{p-1}}\mathbb{P}(s_{M\cdot\Gamma}(u)\leq s_{M\cdot K}(u)) = \\ \nonumber
			&\inf_{u\in\mathbb{S}^{p-1}}\mathbb{P}(s_{\Gamma}(\cfrac{1}{\|M^{T}\cdot u\|}\cdot  M^{T}\cdot u)\leq s_{A}(\cfrac{1}{\|M^{T}\cdot u\|}\cdot M^{T}\cdot u)) = \\ \nonumber
			&\inf_{u\in\mathbb{S}^{p-1}}\mathbb{P}(s_{\Gamma}(u)\leq s_{K}(u))
		\end{aligned}
	\end{equation}
	where the last equality follows from the fact that $f$ is bijective.
\end{proof}

\subsection{Property II. Maximality at the center of symmetry.} In this case, the property is the same for multivariate, functional and fuzzy settings, but for the fact that the notion of symmetry applied has to be defined in the corresponding space.  In the multivariate case, there exist several  notions of symmetry, for instance central, angular and halfspace symmetry \citep{ZuoSerfling, LiuSimplicial}.  In the functional case, there exists one proved to be  topologically valid \citep{NietoBatteyJMVA, aplicacion1}, while there have been two proposals in the fuzzy setting \citep{primerarticulo}.
To propose a notion of symmetry in $\mathcal{K}_{c}(\mathbb{R}^{p}),$ we make use of the central symmetry notion and of the support function of compact convex random sets.
A random variable $X$ on $\mathbb{R}^{p}$ is \textit{centrally symmetric} (or $C$-symmetric) with respect to $x\in\mathbb{R}^{p}$ if $X - x $ and $x - X$ are equally distributed.

\begin{definition}
	Let $\Gamma$ be a compact convex random set. We say that $\Gamma$ is \textit{compact-symmetric} with respect to $K$ if $s_{\Gamma}(u)$ is $C$-symmetric with respect to $s_{K}(u)$ for all $u\in\mathbb{S}^{p-1}$.
\end{definition}

We propose the following property.

\begin{itemize}
	\item[\bf (P2.)]  \textit{ Let $\Gamma$ be a compact convex random set which is symmetric (for a certain notion of symmetry) with respect to $K\in\mathcal{K}_{c}(\mathbb{R}^{p})$. Let $D(\cdot;\Gamma) : \mathcal{K}_{c}(\mathbb{R}^{p})\rightarrow [0,\infty)$ be a function. Then
		\begin{equation}\nonumber
			D(K;\Gamma) = \sup_{L\in\mathcal{K}_{c}(\mathbb{R}^{p})} D(L;\Gamma).
		\end{equation}
	}
\end{itemize}

Thus, this property is analogous in the multivariate, functional and fuzzy case. The only difference is the notion of symmetry defined for each case. Note that the above defined notion of symmetry for compact convex random sets, which makes use of $C$-symmetry, is also an adaptation of the $F$-symmetry \citep{primerarticulo} of the fuzzy case, based on support functions.

With the above notion of compact-symmetry, we have the following result.

\begin{proposition}\label{TukeyP2}
	The function $D_{CT}$ satisfies P2.
\end{proposition}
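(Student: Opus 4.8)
The plan is to reduce the statement to a one-dimensional problem by means of the representation \eqref{Tukeyhalfspace}, which expresses $D_{CT}(K;\Gamma)$ as the infimum over $u\in\mathbb{S}^{p-1}$ of the univariate halfspace depths $HD(s_K(u);s_\Gamma(u))$. The assumption that $\Gamma$ is compact-symmetric with respect to $K$ says precisely that, for each fixed direction $u$, the real random variable $s_\Gamma(u)$ is $C$-symmetric about the point $s_K(u)$. Hence the whole problem splits into a family of univariate maximality statements, one per direction.

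The key ingredient is the elementary univariate fact that if a real random variable $Z$ is symmetric about $m$, then $HD(m;Z)\geq HD(c;Z)$ for every $c\in\mathbb{R}$. I would prove this directly rather than invoke the multivariate M2 of \citet{ZuoSerfling}, since the one-dimensional argument is immediate and keeps the proof self-contained. Recalling that in dimension one $HD(c;Z)=\min\{\mathbb{P}(Z\leq c),\mathbb{P}(Z\geq c)\}$, symmetry gives $\mathbb{P}(Z\leq m)=\mathbb{P}(Z\geq m)$, so $HD(m;Z)$ equals the common value $\mathbb{P}(Z\geq m)$. For $c\geq m$, monotonicity of $c\mapsto\mathbb{P}(Z\geq c)$ yields $HD(c;Z)\leq\mathbb{P}(Z\geq c)\leq\mathbb{P}(Z\geq m)=HD(m;Z)$; the case $c\leq m$ is handled symmetrically using $\mathbb{P}(Z\leq c)$.

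Applying this with $Z=s_\Gamma(u)$, $m=s_K(u)$ and $c=s_L(u)$ for an arbitrary $L\in\mathcal{K}_c(\mathbb{R}^p)$, I obtain $HD(s_L(u);s_\Gamma(u))\leq HD(s_K(u);s_\Gamma(u))$ for every $u\in\mathbb{S}^{p-1}$. Taking the infimum over $u$ on both sides and using \eqref{Tukeyhalfspace} gives $D_{CT}(L;\Gamma)\leq D_{CT}(K;\Gamma)$. As this holds for every $L$ and $K$ is itself admissible in the supremum, I conclude $D_{CT}(K;\Gamma)=\sup_{L\in\mathcal{K}_c(\mathbb{R}^p)}D_{CT}(L;\Gamma)$, which is exactly P2.

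There is no serious obstacle once \eqref{Tukeyhalfspace} is in hand; the only points requiring care are that the univariate maximality must hold at the common center $s_K(u)$ simultaneously for all $u$ before passing to the infimum, and that possible atoms of $s_\Gamma(u)$ at $s_K(u)$ are harmless, since the argument relies only on the monotonicity of the tail probabilities and on the equality $\mathbb{P}(Z\leq m)=\mathbb{P}(Z\geq m)$, neither of which is affected by atoms.
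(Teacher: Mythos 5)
Your proof is correct and follows essentially the same route as the paper's: both reduce P2, via the support function, to a per-direction univariate statement --- the paper observes that compact-symmetry makes $s_K(u)$ a median of $s_\Gamma(u)$, so both tail probabilities in \eqref{expresionTukey2} are at least $1/2$, while you phrase the same fact as maximality of the univariate halfspace depth at the symmetry center and then take infima over $u$ via \eqref{Tukeyhalfspace}. If anything, your write-up makes explicit the per-direction comparison $HD(s_L(u);s_\Gamma(u))\leq HD(s_K(u);s_\Gamma(u))$, and hence the passage from the $1/2$ bound to maximality over all $L$, which the paper's final sentence leaves implicit.
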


\begin{proof}
	By hypothesis, let us suppose that $\Gamma$ is compact-symmetric with respect to $K$. By definition, we have that the real random variable $s_{\Gamma}(u)$ is $C$-symmetric with respect to $s_{K}(u)$ for all $u\in\mathbb{S}^{p-1}$. This means that $$s_{K}(u) \in \mbox{Med}(s_{\Gamma}(u))$$ for all $u\in\mathbb{S}^{p-1}$, where $ \mbox{Med}(\cdot)$ denotes the univariate median. It implies that $$\mathbb{P}(s_{\Gamma}(u)\leq s_{K}(u))\geq 1/2\mbox{ and }\mathbb{P}(s_{\Gamma}(u)\geq s_{K}(u))\geq 1/2.$$ Using the expression of $D_{CT}$ in Equation \ref{expresionTukey2}, we have that $D_{CT}(\cdot;\Gamma)$ is maximized in $K$.
\end{proof}

\subsection{Property III. Monotonicity with respect to the center.} In the multivariate case  \cite{ZuoSerfling}, this property is understood in an algebraic way, as the convex combinations between the element of maximal depth and another point are considered. As the operations of sum and  product by a scalar are defined in $\mathcal{K}_{c}(\mathbb{R}^{p})$ , we can propose the same property.

\begin{itemize}
	\item[\bf (P3a.)]  \textit{ \mbox{ } Let $\Gamma$ be a compact convex random set and let $K\in\mathcal{K}_{c}(\mathbb{R}^{p})$ maximize $D(\cdot;\Gamma)$. Then, 
		$$D((1-\lambda)\cdot K + \lambda\cdot L;\Gamma)\geq D(L;\Gamma)$$ 
		for all $\lambda\in [0,1]$ and $L\in\mathcal{K}_{c}(\mathbb{R}^{p})$.
	}
\end{itemize}

Additionally, this property is analogous to property P3a. in the definition of semilinear depth in the fuzzy setting \cite{primerarticulo}.

In the functional (metric) case, a different property was proposed by \citet[Property P-3.]{NietoBattey} which uses explicitly the metric in the space. We can see $\mathcal{K}_{c}(\mathbb{R}^{p})$ as a metric space with the Hausdorff metric $d_{\mathcal{H}}$. Thus, another possible property is the following.

\begin{itemize}
	\item[\bf (P3b.)]  \textit{ \mbox{ }  Let $\Gamma$ be a compact convex random set, $d$ a metric in $\mathcal{K}_{c}(\mathbb{R}^{p})$ and $K,L,S\in\mathcal{K}_{c}(\mathbb{R}^{p})$ three sets such that $K$ maximizes $D(\cdot;\Gamma)$ and $d(K,S) = d(K,L) + d(L,S)$. Then, $$D(L;\Gamma)\geq D(S;\Gamma).$$
	}
\end{itemize}

This property is analogous to property P3b. in the definition of geometric depth in the fuzzy setting \cite{primerarticulo}.

About these two possible translations of the multivariate property, we have the following two results.

\begin{proposition}\label{TukeyP3a}
	The function $D_{CT}$ satisfies P3a.
\end{proposition}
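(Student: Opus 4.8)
The plan is to reduce the set-valued monotonicity to a direction-by-direction statement on the real line by means of the representation \eqref{Tukeyhalfspace}, which writes $D_{CT}(\cdot;\Gamma)$ as an infimum over $u\in\mathbb{S}^{p-1}$ of the univariate halfspace depths $HD(s_{(\cdot)}(u);s_{\Gamma}(u))$. The crucial algebraic input is that support functions are additive and positively homogeneous, so for $\lambda\in[0,1]$ one has the pointwise identity
$$s_{(1-\lambda)\cdot K + \lambda\cdot L}(u) = (1-\lambda)s_{K}(u) + \lambda s_{L}(u)$$
for every $u$; passing to support functions thus turns the convex combination of the two sets into the scalar convex combination of their support values. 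Hence it suffices to control $HD\big((1-\lambda)s_{K}(u) + \lambda s_{L}(u); s_{\Gamma}(u)\big)$ for each fixed $u$ and then take the infimum.

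I would fix $u$ and set $g_{u}(x) := \min\{\mathbb{P}(s_{\Gamma}(u)\leq x), \mathbb{P}(s_{\Gamma}(u)\geq x)\} = HD(x; s_{\Gamma}(u))$. The property I would exploit is that $g_{u}$ is quasi-concave on $\mathbb{R}$: since $x\mapsto\mathbb{P}(s_{\Gamma}(u)\leq x)$ is non-decreasing and $x\mapsto\mathbb{P}(s_{\Gamma}(u)\geq x)$ is non-increasing, every superlevel set $\{x : g_{u}(x)\geq c\}$ is the intersection of an up-set with a down-set, hence an interval. Quasi-concavity then delivers, for any reals $a,b$ and any $\lambda\in[0,1]$, the intermediate-value bound $g_{u}((1-\lambda)a + \lambda b)\geq\min\{g_{u}(a), g_{u}(b)\}$, which I would apply with $a = s_{K}(u)$ and $b = s_{L}(u)$.

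To finish, I would bound the two endpoint values from below. By \eqref{Tukeyhalfspace}, $g_{u}(s_{L}(u))\geq\inf_{v\in\mathbb{S}^{p-1}} g_{v}(s_{L}(v)) = D_{CT}(L;\Gamma)$ and, likewise, $g_{u}(s_{K}(u))\geq D_{CT}(K;\Gamma)$; and since $K$ maximizes $D_{CT}(\cdot;\Gamma)$ we have $D_{CT}(K;\Gamma)\geq D_{CT}(L;\Gamma)$. Therefore both endpoint values are at least $D_{CT}(L;\Gamma)$, so the intermediate-value bound gives $g_{u}\big((1-\lambda)s_{K}(u) + \lambda s_{L}(u)\big)\geq D_{CT}(L;\Gamma)$ for every $u$. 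Taking the infimum over $u\in\mathbb{S}^{p-1}$ and using the support-function identity together with \eqref{Tukeyhalfspace} once more yields $D_{CT}((1-\lambda)\cdot K + \lambda\cdot L;\Gamma)\geq D_{CT}(L;\Gamma)$.

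The step I expect to require the most care is the quasi-concavity reduction: it would be tempting, but unjustified, to assert that $s_{K}(u)$ is a median of $s_{\Gamma}(u)$ for every direction $u$ (this need not hold for a global maximizer of the infimum defining $D_{CT}$). Routing the monotonicity through the intermediate-value bound for the unimodal $g_{u}$ avoids that pitfall, and everything else is routine bookkeeping with infima and the additivity of support functions.
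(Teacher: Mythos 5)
Your proof is correct, but it follows a genuinely different route from the paper's. The paper works directly with the halfspace definition \eqref{definicionTukeyCompacto}: for the $S_{u,t}^{-}$ branch it decomposes the index set $\mathbb{K}$ of pairs $(u,t)$ with $(1-\lambda)\cdot K+\lambda\cdot L\in S_{u,t}^{-}$ into three disjoint pieces according to whether $K$ and/or $L$ lie in $S_{u,t}^{-}$, observes (via the same support-function linearity you use) that each piece is contained in the index set for $K$ or for $L$, bounds the infimum over each piece by $D_{CT}(K;\Gamma)$ or $D_{CT}(L;\Gamma)$, and invokes maximality of $K$; the $S_{u,t}^{+}$ branch is handled symmetrically. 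You instead pass through the reformulation \eqref{df}--\eqref{Tukeyhalfspace}, which merges the two branches into a single per-direction quantity $g_{u}$, and isolate a clean reusable lemma: the univariate halfspace depth $g_{u}$ is quasi-concave because its superlevel sets are intersections of an up-set and a down-set, hence intervals. Your route is more modular and shorter, mirrors the classical multivariate argument, and in fact delivers Theorem \ref{teoremaconvexidad} (P7, convexity of the contours) essentially for free, since quasi-concavity of each $g_{u}$ composed with the affine map $K\mapsto s_{K}(u)$ makes $D_{CT}(\cdot;\Gamma)$ itself quasi-concave under Minkowski convex combinations; the paper instead re-runs its three-set decomposition to prove P7. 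What the paper's approach buys is self-containedness at the level of halfspace events: it never needs the interchange of $\min$ and $\inf$, staying with the raw definition. Your closing remark is also well taken: a maximizer $K$ need not have $s_{K}(u)$ a median of $s_{\Gamma}(u)$ in every direction, and routing the argument through quasi-concavity rather than a median claim is exactly what keeps the proof sound.
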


\begin{proof}
	Let $\Gamma$ be a compact convex random set and let $K,L\in\mathcal{K}_{c}(\mathbb{R}^{p})$ be two sets such that $K$ maximizes $D_{CT}(\cdot;\Gamma)$. Using the properties of the support function of a set, we have that $$s_{(1 - \lambda)\cdot K + \lambda\cdot L} (u) = (1-\lambda) s_{K}(u) + \lambda s_{L}(u)$$ for all $u\in\mathbb{S}^{p-1}$ and $\lambda\in [0,1]$.
	
	We consider the set $$\mathbb{K} = \{(u,t)\in\mathbb{S}^{p-1}\times\mathbb{R} : (1-\lambda)\cdot K + \lambda\cdot L\in S_{u,t}^{-}\}.$$ It can be expressed as $\mathbb{K}_{1}\cup\mathbb{K}_{2}\cup\mathbb{K}_{3}$, where
	\begin{equation}
		\begin{aligned}\nonumber
			\mathbb{K}_{1} &= \{(u,t)\in\mathbb{S}^{p-1}\times\mathbb{R} : K, L\in S_{u,t}^{-}, L\in S_{u,t}^{-}\}, \\ \nonumber
			\mathbb{K}_{2} &= \{(u,t)\in\mathbb{S}^{p-1}\times\mathbb{R} : K\in S_{u,t}^{-}, L\not\in S_{u,t}^{-}, (1-\lambda)\cdot K + \lambda\cdot L\in S_{u,t}^{-}\}, \\ \nonumber
			\mathbb{K}_{3} &= \{(u,t)\in\mathbb{S}^{p-1}\times\mathbb{R} : K\not\in S_{u,t}^{-}, L\in S_{u,t}^{-}, (1-\lambda)\cdot K + \lambda\cdot L\in S_{u,t}^{-}\}.
		\end{aligned}
	\end{equation}
	It is clear that they are disjoint sets. Thus, we have that
	\begin{equation}\label{ecuacion1Tukey3}
		\begin{aligned}
			&\inf_{\underset{(1-\lambda)\cdot K + \lambda\cdot L\in S_{u,t}^{-}}{u\in\mathbb{S}^{p-1}, t\in\mathbb{R} :}}\mathbb{P}(\Gamma\in S_{u,t}^{-}) = \inf_{(u,t)\in\mathbb{K}}\mathbb{P}(\Gamma\in S_{u,t}^{-}) = \\
			&\min\{\inf_{(u,t)\in\mathbb{K}_{1}}\mathbb{P}(\Gamma\in S_{u,t}^{-}), \inf_{(u,t)\in\mathbb{K}_{2}}\mathbb{P}(\Gamma\in S_{u,t}^{-}), \inf_{(u,t)\in\mathbb{K}_{3}}\mathbb{P}(\Gamma\in S_{u,t}^{-})\}.
		\end{aligned}
	\end{equation}
	
	Taking into account that $$\mathbb{K}_{1},\mathbb{K}_{2}\subseteq\{(u,t)\in\mathbb{S}^{p-1}\times\mathbb{R} : K\in S_{u,t}^{-}\}$$ and $$\mathbb{K}_{3}\subseteq\{(u,t)\in\mathbb{S}^{p-1}\times\mathbb{R} : L\in S_{u,t}^{-}\},$$ it is obtained that
	\begin{equation}\label{ecuacion2Tukey3}
		\begin{aligned}
			\inf_{(u,t)\in\mathbb{K}_{1}}\mathbb{P}(\Gamma\in S_{u,t}^{-})&\geq\inf_{\underset{K\in S_{u,t}^{-}}{u\in\mathbb{S}^{p-1}, t\in\mathbb{R}:}}\mathbb{P}(\Gamma\in S_{u,t}^{-})\geq D_{CT}(K;\Gamma),\\
			\inf_{(u,t)\in\mathbb{K}_{2}}\mathbb{P}(\Gamma\in S_{u,t}^{-})&\geq\inf_{\underset{K\in S_{u,t}^{-}}{u\in\mathbb{S}^{p-1}, t\in\mathbb{R}:}}\mathbb{P}(\Gamma\in S_{u,t}^{-})\geq D_{CT}(K;\Gamma),\\
			\inf_{(u,t)\in\mathbb{K}_{3}}\mathbb{P}(\Gamma\in S_{u,t}^{-})&\geq\inf_{\underset{L\in S_{u,t}^{-}}{u\in\mathbb{S}^{p-1}, t\in\mathbb{R}:}}\mathbb{P}(\Gamma\in S_{u,t}^{-})\geq D_{CT}(L;\Gamma).
		\end{aligned}
	\end{equation}
	Using  \eqref{ecuacion1Tukey3} and \eqref{ecuacion2Tukey3} and taking into account that $K$ maximizes $D_{CT}$, we have that
	\begin{equation}\nonumber
		\inf_{\underset{(1-\lambda)\cdot K + \lambda\cdot L\in S_{u,t}^{-}}{u\in\mathbb{S}^{p-1}, t\in\mathbb{R} :}}\mathbb{P}(\Gamma\in S_{u,t}^{-})\geq D_{CT}(L;\Gamma).
	\end{equation}
	Analogously, we get
	\begin{equation}\nonumber
		\inf_{\underset{(1-\lambda)\cdot K + \lambda\cdot L\in S_{u,t}^{+}}{u\in\mathbb{S}^{p-1}, t\in\mathbb{R} :}}\mathbb{P}(\Gamma\in S_{u,t}^{+})\geq D_{CT}(L;\Gamma).
	\end{equation}
	Thus, $D_{CT}((1-\lambda)\cdot K + \lambda L;\Gamma)\geq D_{CT}(L;\Gamma)$ and $D_{CT}$ satisfies property P3a.
\end{proof}

\begin{proposition}\label{TukeyP3b}
	The function $D_{CT}$ does not satisfy P3b with respect to the distance $d_{\mathcal{H}}$.
\end{proposition}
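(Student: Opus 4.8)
The plan is to refute P3b by an explicit counterexample, working in the simplest case $p=1$, where $\mathcal{K}_{c}(\mathbb{R})$ consists of intervals $[a,b]$ with $a\le b$. The starting point is that here both the depth and the Hausdorff metric split across the two endpoints. Writing $\Gamma=[\ell,r]$ for a random interval and evaluating \eqref{expresionTukey2} at the only two unit vectors $u=\pm 1$ (so that $s_\Gamma(1)=r$, $s_\Gamma(-1)=-\ell$, $s_{[a,b]}(1)=b$, $s_{[a,b]}(-1)=-a$), the depth collapses to $D_{CT}([a,b];\Gamma)=\min\{h(a),g(b)\}$, where $g(b)=\min\{\mathbb{P}(r\le b),\mathbb{P}(r\ge b)\}$ and $h(a)=\min\{\mathbb{P}(\ell\le a),\mathbb{P}(\ell\ge a)\}$ are the univariate halfspace depths of the endpoint distributions. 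At the same time, \eqref{ecuacionHausdorff} gives $d_{\mathcal{H}}([a,b],[a',b'])=\max\{|a-a'|,|b-b'|\}$.

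First I would isolate the geometric mechanism behind the failure: because $d_{\mathcal{H}}$ is a supremum (here a maximum) over directions, the betweenness relation $d_{\mathcal{H}}(K,S)=d_{\mathcal{H}}(K,L)+d_{\mathcal{H}}(L,S)$ is far more permissive than the algebraic betweenness used in P3a. Concretely, if the left endpoints satisfy $|a_K-a_S|=|a_K-a_L|+|a_L-a_S|$ and this common value dominates every right-endpoint discrepancy, then metric betweenness holds regardless of where $b_L$ lies within a whole admissible interval. This slack in the right endpoint of $L$ is exactly what I would exploit to drive $D_{CT}(L;\Gamma)$ down while keeping $L$ metrically on the segment from $K$ to $S$.

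Then I would make the construction explicit. Take $\ell\sim\mathrm{Unif}[-5,5]$ and $r\sim\mathrm{Unif}[9,11]$ (so $\ell\le r$ almost surely and $\Gamma=[\ell,r]$ is a genuine random interval), which yields $K=[0,10]$ as the deepest set with $D_{CT}(K;\Gamma)=\tfrac12$. Choosing $S=[3,10]$ and $L=[1.5,8.5]$, one checks $d_{\mathcal{H}}(K,L)=d_{\mathcal{H}}(L,S)=1.5$ and $d_{\mathcal{H}}(K,S)=3$, so the hypothesis of P3b is met; yet $D_{CT}(S;\Gamma)=\min\{h(3),g(10)\}=\min\{0.2,0.5\}=0.2$, whereas $D_{CT}(L;\Gamma)=\min\{h(1.5),g(8.5)\}=0$ since $8.5$ falls outside the support of $r$ and forces $g(8.5)=0$. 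Thus $D_{CT}(L;\Gamma)<D_{CT}(S;\Gamma)$, directly contradicting the conclusion of P3b.

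The remaining verifications are routine: that $K$, $L$, $S$ are legitimate intervals, that $K$ maximizes $D_{CT}$ (immediate from $\min\{h(a),g(b)\}\le\tfrac12$ with equality at $[0,10]$), and the three Hausdorff computations. The genuine design issue, which I would flag as the crux, is arranging the configuration so that $b_L$ can be pushed off the support of $r$ while metric betweenness survives; this is precisely what the dominance of the left-endpoint gap over the right-endpoint gap guarantees, and it is where the mismatch between a supremum-type metric and the infimum-over-directions structure of $D_{CT}$ is used. I expect no other obstacle, since the decoupling $D_{CT}([a,b];\Gamma)=\min\{h(a),g(b)\}$ reduces everything to elementary one-dimensional facts.
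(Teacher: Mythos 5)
Your proposal is correct and takes essentially the same route as the paper: an explicit counterexample with random intervals in $\mathcal{K}_{c}(\mathbb{R})$, exploiting that Hausdorff betweenness only constrains the dominating endpoint gap, so the other endpoint of $L$ can be pushed outside the support of $\Gamma$, forcing $D_{CT}(L;\Gamma)=0<D_{CT}(S;\Gamma)$. The only difference is cosmetic: the paper uses a two-atom discrete random interval ($\Gamma(\omega_1)=[1,2]$ with probability $3/4$, $\Gamma(\omega_2)=[2,7]$ with probability $1/4$, $L=[3,5]$), whereas you use uniform endpoint distributions, and both verifications are equally routine.
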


\begin{proof}
	The proof is by counterexample. Let $(\{\omega_{1},\omega_{2}\},\mathcal{P}(\{\omega_{1},\omega_{2}\}), \mathbb{P})$ be a probabilistic space such that $$\mathbb{P}(\omega_{1}) = 3/4\mbox{ and }\mathbb{P}(\omega_{2}) = 1/4.$$ We consider the compact convex random set $\Gamma : \Omega\rightarrow\mathcal{K}_{c}(\mathbb{R})$ defined by $$\Gamma(\omega_{1}) = [1,2]\mbox{ and }\Gamma(\omega_{2}) = [2,7].$$ It is clear that $$D_{CT}(\Gamma(\omega_{1});\Gamma) = 3/4$$ and it is the set which maximizes $D_{CT}$. Let us consider $L = [3,5]$. We have that
	\begin{equation}\nonumber
		5 = d_{\mathcal{H}}(\Gamma(\omega_{1}), \Gamma(\omega_{2})) = d_{\mathcal{H}}(\Gamma(\omega_{1}),L) + d_{\mathcal{H}}(\Gamma(\omega_{2}),L) = 3 + 2.
	\end{equation}
	Moreover, $$D_{CT}(\Gamma(\omega_{2});\Gamma) = 1/4\mbox{ and }D_{CT}(L;\Gamma) = \mathbb{P}(s_{\Gamma}(-1)\leq s_{C}(-1)) = 0.$$ Thus, $D_{CT}$ violates property P3b.
\end{proof}

Notice that Tukey depth may satisfy Property P3b if distances between sets are not measured with the Hausdorff metric, e.g., in the $L^p$-type metrics introduced by \citet{Vitale}.

\subsection{Property IV. Vanishing at infinity.} The property in the multivariate case is understood in a geometrical way, considering a sequence $\{x_{n}\}_{n}$ such that $\|x_{n}\|\rightarrow\infty$ \cite{ZuoSerfling}. We can also consider a sequence $\{a + nb\}_{n}$ with $a,b\in\mathbb{R}^{p}$ such that $b\neq 0$ and suppose that the sequence of distances diverges. Thus, on this setting, we also propose two possible properties, the first one from an algebraic point of view and the second one taking into account that the set $\mathcal{K}_{c}(\mathbb{R}^{p})$ can be viewed as a metric space using the Hausdorff distance.

\begin{itemize}
	\item[\bf (P4a.)]  \textit{ \mbox{ } Let $\Gamma$ be a compact convex random set and let $K,L\in\mathcal{K}_{c}(\mathbb{R}^{p})$ be two sets such that $K$ maximizes $D(\cdot;\Gamma)$ and $L\neq\{0\}$. Then, $$\lim_{n} D(K+n\cdot L;\Gamma) = 0.$$}
	
	\item[\bf (P4b.)]  \textit{ \mbox{ } Let $\Gamma$ be a compact convex random set, $d$ a metric in $\mathcal{K}_{c}(\mathbb{R}^{p})$, $K\in\mathcal{K}_{c}(\mathbb{R}^{p})$  a set  that maximizes $D(\cdot;\Gamma)$ and $\{K_{n}\}_{n}$ a sequence of elements of $\mathcal{K}_{c}(\mathbb{R}^{p})$ such that $\lim_{n} d(K,K_{n}) = \infty$. Then,
		$$\lim_{n} D(K_{n};\Gamma) = 0.$$
	}
\end{itemize}
Property P4a. parallels the fourth property of the semilinear depth for fuzzy sets, while  P4b. parallels the fourth property of geometric depth for fuzzy sets.

About those properties we have the following results.

\begin{proposition}\label{TukeyP4}
	The function $D_{CT}$ satisfies P4a. and P4b. with respect to the distance $d_{\mathcal{H}}$.
\end{proposition}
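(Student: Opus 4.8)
The plan is to work throughout with the interchanged form \eqref{df},
$$D_{CT}(K;\Gamma) = \inf_{u\in\mathbb{S}^{p-1}}\min\{\mathbb{P}(s_{\Gamma}(u)\leq s_{K}(u)),\ \mathbb{P}(s_{\Gamma}(u)\geq s_{K}(u))\},$$
and, in both cases, to bound the depth from above by its value along a single well-chosen direction. Since $D_{CT}\geq 0$, it suffices to exhibit an upper bound tending to $0$.

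For P4a, I would first note that because $L\neq\{0\}$ there is a direction $u_{0}\in\mathbb{S}^{p-1}$ with $s_{L}(u_{0})>0$: if instead $s_{L}(u)\leq 0$ for every $u$, then $\langle l,u\rangle\leq 0$ for all $l\in L$ and all $u\in\mathbb{S}^{p-1}$, which taking $u=l/\|l\|$ forces $l=0$, i.e.\ $L=\{0\}$. Using $s_{K+n\cdot L}(u_{0})=s_{K}(u_{0})+n\,s_{L}(u_{0})$ and bounding the infimum over $u$ by the value at $u_{0}$, and the minimum by its second argument, I obtain
$$D_{CT}(K+n\cdot L;\Gamma)\leq \mathbb{P}(s_{\Gamma}(u_{0})\geq s_{K}(u_{0})+n\,s_{L}(u_{0})).$$
As $s_{\Gamma}(u_{0})$ is an a.s.\ finite real random variable and $s_{K}(u_{0})+n\,s_{L}(u_{0})\to+\infty$, the right-hand side tends to $0$ by continuity of the probability from above, proving $\lim_{n}D_{CT}(K+n\cdot L;\Gamma)=0$. (The maximality of $K$ is not actually used.)

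For P4b with $d=d_{\mathcal{H}}$, the added difficulty is that $d_{\mathcal H}(K,K_{n})=\sup_{u}|s_{K}(u)-s_{K_{n}}(u)|\to\infty$ only says the gap is large in \emph{some} direction that moves with $n$, so no single random variable can be fixed in advance. The device I would use is the uniform bound $|s_{\Gamma}(u)|\leq\|\Gamma\|$ valid for every $u$, where $\|\Gamma\|:=d_{\mathcal H}(\Gamma,\{0\})=\sup_{k\in\Gamma}\|k\|$ is one a.s.\ finite random variable (measurable, since $d_{\mathcal H}(\cdot,\{0\})$ is continuous and $\Gamma$ is Borel with respect to $d_{\mathcal H}$). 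Since support functions are continuous on the compact sphere, the supremum defining $d_{n}:=d_{\mathcal H}(K,K_{n})$ is attained at some $u_{n}\in\mathbb{S}^{p-1}$, and with $M:=\sup_{u}|s_{K}(u)|<\infty$ one gets $|s_{K_{n}}(u_{n})|\geq d_{n}-M$. Splitting according to the sign of $s_{K_{n}}(u_{n})-s_{K}(u_{n})$, I would bound the minimum at $u_{n}$ by $\mathbb{P}(s_{\Gamma}(u_{n})\geq s_{K_{n}}(u_{n}))$ in one case and by $\mathbb{P}(s_{\Gamma}(u_{n})\leq s_{K_{n}}(u_{n}))$ in the other; in both, $|s_{\Gamma}(u_{n})|\leq\|\Gamma\|$ gives the direction-free estimate
$$D_{CT}(K_{n};\Gamma)\leq \mathbb{P}(\|\Gamma\|\geq d_{n}-M).$$
Because $d_{n}\to\infty$ and $\|\Gamma\|<\infty$ a.s., the right-hand side tends to $0$, yielding $\lim_{n}D_{CT}(K_{n};\Gamma)=0$. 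The main obstacle, the moving direction $u_{n}$, is exactly what the uniform bound by $\|\Gamma\|$ resolves; everything else reduces to tightness of $\|\Gamma\|$ and of the fixed variable $s_{\Gamma}(u_{0})$.
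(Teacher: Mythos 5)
Your proof is correct, and while your P4a argument coincides with the paper's (same fixed direction $u_{0}$ with $s_{L}(u_{0})>0$, same bound $D_{CT}(K+n\cdot L;\Gamma)\le\mathbb{P}(s_{\Gamma}(u_{0})\ge s_{K}(u_{0})+n\,s_{L}(u_{0}))\to 0$, your existence argument for $u_{0}$ being merely more explicit than the paper's ``without loss of generality''), your P4b argument takes a genuinely different route. The paper first replaces $d_{\mathcal{H}}(K,K_{n})\to\infty$ by $d_{\mathcal{H}}(K_{n},\{0\})\to\infty$ and then invokes Proposition \ref{propositionHausdorff} to produce a \emph{single fixed} direction along which $s_{K_{n}}\to\infty$, after which the P4a computation is repeated; you instead let the direction move with $n$, taking $u_{n}$ where the supremum in \eqref{ecuacionHausdorff} is attained, splitting on the sign of $s_{K_{n}}(u_{n})-s_{K}(u_{n})$, and dominating $|s_{\Gamma}(u_{n})|$ uniformly by the one almost surely finite random variable $\|\Gamma\|=d_{\mathcal{H}}(\Gamma,\{0\})$, which yields the direction-free estimate $D_{CT}(K_{n};\Gamma)\le\mathbb{P}(\|\Gamma\|\ge d_{n}-M)\to 0$. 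What your route buys is robustness: the fixed-direction claim of Proposition \ref{propositionHausdorff} is delicate as stated, because the farthest points of $K_{n}$ may oscillate in direction --- for $K_{n}=\{(-1)^{n}n\,e_{1}\}$ one has $d_{\mathcal{H}}(K_{n},\{0\})\to\infty$, yet $s_{K_{n}}(u)=(-1)^{n}n\,\langle e_{1},u\rangle$ tends to $+\infty$ for no $u\in\mathbb{S}^{p-1}$ --- so that lemma really holds only along subsequences, and the paper's P4b argument implicitly needs the standard ``every subsequence has a further subsequence converging to $0$'' repair. Your uniform domination by $\|\Gamma\|$ sidesteps any such compactness extraction and gives a clean quantitative bound; what the paper's approach buys, once repaired, is brevity, since P4b reduces to the P4a computation plus a reusable geometric statement about escaping sequences of compact sets.
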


The following proposition is used in the proof of Proposition \ref{TukeyP4} for property P4b.

\begin{proposition}\label{propositionHausdorff}
	Let $\{K_{n}\}_{n}$ be a sequence of elements of $\mathcal{K}_{c}(\mathbb{R}^{p})$ such that $\lim_{n} d_{\mathcal{H}}(K_{n};\{0\}) = \infty$. Then, there exists $u\in\mathbb{S}^{p-1}$ such that $$\lim_{n} s_{K_{n}}(u) = \infty.$$
\end{proposition}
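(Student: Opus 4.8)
The plan is to translate everything into support functions and then exploit the compactness of $\mathbb{S}^{p-1}$. First I would note that, directly from the definition of $d_{\mathcal H}$, one has $d_{\mathcal H}(K_n,\{0\}) = \sup_{k\in K_n}\|k\|$, since the competing term $\inf_{k\in K_n}\|k\|$ never exceeds it. On the other hand, interchanging the two suprema gives $\sup_{u\in\mathbb{S}^{p-1}}s_{K_n}(u) = \sup_{k\in K_n}\sup_{u\in\mathbb{S}^{p-1}}\langle k,u\rangle = \sup_{k\in K_n}\|k\|$. Hence
\begin{equation*}
	R_n := d_{\mathcal H}(K_n,\{0\}) = \sup_{u\in\mathbb{S}^{p-1}} s_{K_n}(u),
\end{equation*}
and the hypothesis is exactly the statement $R_n\to\infty$.

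Next I would select directions realizing these suprema. Since $K_n$ is compact there is $k_n\in K_n$ with $\|k_n\| = R_n$; put $u_n := k_n/R_n\in\mathbb{S}^{p-1}$ (legitimate once $R_n>0$, which holds for $n$ large). Then $s_{K_n}(u_n)\geq\langle k_n,u_n\rangle = R_n$. Because $\mathbb{S}^{p-1}$ is compact, $\{u_n\}_n$ has a limit point $u\in\mathbb{S}^{p-1}$, and for this $u$ the value of the support function at the \emph{fixed} direction $u$ is controlled from below by
\begin{equation*}
	s_{K_n}(u)\geq\langle k_n,u\rangle = R_n\,\langle u_n,u\rangle .
\end{equation*}
Whenever $u_n$ is close to $u$ the factor $\langle u_n,u\rangle$ is close to $\|u\|^2 = 1$, so the right-hand side is large; this is the mechanism that should single out the required direction $u$ along which $s_{K_n}(u)$ diverges.

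The main obstacle is to make the divergence hold for the whole sequence, as the statement demands, and not merely along the indices on which $u_n$ approaches $u$. The bound $s_{K_n}(u)\geq R_n\langle u_n,u\rangle$ yields $s_{K_n}(u)\to\infty$ as soon as $\langle u_n,u\rangle$ stays bounded away from $0$ for all large $n$; equivalently, as soon as the maximizing directions $u_n$ genuinely converge to $u$ rather than only accumulating at it. The crux is therefore to upgrade the accumulation of $\{u_n\}_n$ to actual convergence, for which I would try to show that the farthest points $k_n$ eventually concentrate in a single open halfspace $\{\,\langle\cdot,u\rangle>0\,\}$ determined by $u$. Controlling the directional behaviour of the $k_n$ uniformly along the full sequence is the delicate point on which the stated full-sequence conclusion rests.
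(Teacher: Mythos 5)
You have correctly reduced the problem (establishing $R_n := d_{\mathcal H}(K_n,\{0\}) = \sup_{k\in K_n}\|k\| = \sup_{u\in\mathbb{S}^{p-1}} s_{K_n}(u)$), correctly extracted maximizers $k_n$ and directions $u_n = k_n/R_n$ with $s_{K_n}(u_n)\geq R_n$, and correctly located the difficulty; but the step you leave open cannot be closed, because the proposition is false as stated. Take $K_n = \{(-1)^n n\, e_1\}$, a sequence of singletons in $\mathcal{K}_{c}(\mathbb{R}^{p})$: then $d_{\mathcal H}(K_n,\{0\}) = n\to\infty$, yet for every $u\in\mathbb{S}^{p-1}$ one has $s_{K_n}(u) = (-1)^n n\langle e_1,u\rangle$, which is identically $0$ when $\langle e_1,u\rangle = 0$ and oscillates (tending to $-\infty$ along one parity) otherwise, so no direction satisfies $\lim_n s_{K_n}(u) = \infty$. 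Hence your plan to show that the farthest points eventually concentrate in a single open halfspace must fail in general: the maximizing directions $u_n$ can have several accumulation points, and nothing in the hypothesis prevents this.

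What your compactness argument does prove is the correct subsequential version: there exist $u\in\mathbb{S}^{p-1}$ and a subsequence along which $s_{K_{n_k}}(u)\geq R_{n_k}\langle u_{n_k},u\rangle\to\infty$ (equivalently, $\limsup_n s_{K_n}(u)=\infty$ for some $u$). It is worth knowing that the paper's own proof commits precisely the leap you declined to make: after fixing $x_n\in K_n$ with $\|x_n\|\to\infty$, it asserts without justification that some $u$ satisfies $\lim_n\langle u,x_n\rangle = \infty$, which the example above refutes; so the published argument is flawed at exactly the point you isolated. The weakened conclusion suffices for the only place the proposition is used, namely Property P4b in Proposition \ref{TukeyP4}: any subsequence of $\{K_n\}_n$ still satisfies the hypothesis, so along a further subsequence $s_{K_{n_k}}(u')\to\infty$ for some $u'$, whence $D_{CT}(K_{n_k};\Gamma)\leq \mathbb{P}\bigl(s_{\Gamma}(u')\geq s_{K_{n_k}}(u')\bigr)\to 0$; since every subsequence of the bounded sequence $\{D_{CT}(K_n;\Gamma)\}_n$ admits a further subsequence converging to $0$, the full sequence converges to $0$. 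The honest repair, then, is to restate the proposition with a subsequence (or with $\limsup$), which your argument already establishes, and to run this sub-subsequence argument in the proof of P4b.
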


\begin{proof}
	It is a basic property of the Hausdorff distance that $$d_{\mathcal{H}}(K_{n},\{0\}) = \sup\{\|x\| : x\in K_{n}\}$$ for all $n\in\mathbb{N}$. The function $$f_{n} : K_{n}\rightarrow\mathbb{R}$$ defined by $$f_{n}(x) = \|x\|$$ is a continuous function defined over a compact convex set, thus it attains its maximum on $K_{n}$, for all $n\in\mathbb{N}$. Let us denote by $x_{n}$ the point of $K_{n}$ where $f_{n}$ attains its maximum for every $n\in\mathbb{N}$. By hypothesis we have that $$\lim_{n} \|x_{n}\| = \infty.$$ It implies that there exists $u\in\mathbb{S}^{p-1}$ such that $$\lim_{n}\langle u,x_{n}\rangle = \infty.$$ By definition of the support function of a compact convex set, we have that $$\langle u,x_{n}\rangle\leq s_{K_{n}}(u).$$ Thus, $\lim_{n} s_{K_{n}}(u) = \infty$.
\end{proof}

\begin{proof}[Proof of Proposition \ref{TukeyP4}]
	\ \\
	\textbf{Property P4a.} Let $L\neq\{ 0\}$. There exists $u_{0}\in\mathbb{S}^{p-1}$ such that $$s_{L}(u_{0})\neq 0.$$ Without loss of generality we assume $s_{L}(u_{0}) > 0$. Clearly, the sequence $$\{s_{K}(u_{0}) + n\cdot s_{L}(u_{0})\}_{n}$$ is such that $$\lim_{n} s_{K}(u_{0}) + n\cdot s_{L}(u_{0}) = \infty.$$ We have that
	\begin{equation}\nonumber
		D_{CT}(K+\cdot L;\Gamma)\leq \mathbb{P}(s_{\Gamma}(u_{0})\geq s_{K}(u_{0}) + n\cdot s_{L}(u_{0})).
	\end{equation}
	If we take limits in both sides
	\begin{equation}\nonumber
		\lim_{n\rightarrow\infty} D_{CT}(K+n\cdot L;\Gamma)\leq \lim_{n\rightarrow\infty}\mathbb{P}(s_{\Gamma}(u_{0})\geq s_{K}(u_{0}) + n\cdot s_{L}(u_{0})) = 0.
	\end{equation}
	Using the Sandwhich's Rule we have that $\lim_{n} D_{CT}(K + n\cdot L;\Gamma) = 0$.
	
	\vspace{.3cm}
	\textbf{Property P4b.} As the set $K$ is fixed, condition $$\lim_{n} d_{\mathcal{H}}(K,K_{n}) = \infty$$ is equivalent to $$\lim_{n}d_{\mathcal{H}}(K_{n},\{0\}) = \infty.$$ By Proposition \ref{propositionHausdorff}, we have that there exists $u_{0}\in\mathbb{S}^{p-1}$ such that $$\lim_{n} s_{K_{n}}(u) = \infty.$$ The rest of the proof is analogous to that of Property P4a.
\end{proof}

\subsection{Property V. Upper semicontinuity.} This property regards a depth as an upper semicontinuous function at every point of its domain. In the multivariate case it is not considered to be a canonical requirement, but continuity properties are studied in different papers, for instance in  \cite{ZuoSerfling}. This property  is considered in the definition of depth function for functional (metric) spaces  \cite{NietoBattey}.
According to  \cite{NietoBattey},  a depth $D,$  on a metric space $(\mathbb{E},d)$  with respect to a distribution $\mathbb{P}$ on the space, is upper semicontinuous if for all $x\in\mathbb{E}$ and for all $\varepsilon > 0$, there exists $\delta > 0$ such that
$$
\sup_{y : d(x,y)<\delta} D(y;\mathbb{P}) \leq D (x;\mathbb{P}).
$$
The property has not yet being considered in the fuzzy setting. 
\begin{itemize}
	\item[\bf (P5.)]  \textit{ Let $\Gamma$ be a compact convex random set and $d$ a metric defined over $\mathcal{K}_{c}(\mathbb{R}^{p})$. The function $D(\cdot;\Gamma)$ is upper semicontinuous with respect to the distance $d$  in the sense that $$
		\lim\sup_{n} D(K_{n};\Gamma)\leq D(K;\Gamma)
		$$ 
		for every set $K\in\mathcal{K}_{c}(\mathbb{R}^{p})$ and every sequence of sets $\{K_{n}\}_{n}$ such that  $lim_{n}d(K,K_{n}) = 0.$}
\end{itemize}

Notice that upper semicontinuity implies that the contours of the depth function are closed sets.

\begin{proposition}\label{proposicionupperTukey}\label{muj}
	The function $D_{CT}$ satisfies P5. with respect to the distance $d_{\mathcal{H}}$.
\end{proposition}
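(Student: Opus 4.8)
The plan is to reduce the problem to the one-dimensional upper semicontinuity of the univariate Tukey depth, exploiting the representation \eqref{df}. Write, for each fixed $u\in\mathbb{S}^{p-1}$ and $t\in\mathbb{R}$,
$$\phi_{u}(t) := \min\{\mathbb{P}(s_{\Gamma}(u)\leq t),\,\mathbb{P}(s_{\Gamma}(u)\geq t)\},$$
so that \eqref{df} reads $D_{CT}(K;\Gamma) = \inf_{u\in\mathbb{S}^{p-1}}\phi_{u}(s_{K}(u))$. First I would record the metric input: by \eqref{ecuacionHausdorff} the hypothesis $d_{\mathcal{H}}(K,K_{n})\to 0$ is equivalent to $\sup_{u}|s_{K}(u)-s_{K_{n}}(u)|\to 0$, and in particular $s_{K_{n}}(u)\to s_{K}(u)$ for every fixed $u$.

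The core step is the claim that, for each fixed $u$, the map $t\mapsto\phi_{u}(t)$ is upper semicontinuous on $\mathbb{R}$. Here $t\mapsto\mathbb{P}(s_{\Gamma}(u)\leq t)$ is the (right-continuous) distribution function of the real random variable $s_{\Gamma}(u)$, while $t\mapsto\mathbb{P}(s_{\Gamma}(u)\geq t)$ is left-continuous. Computing the one-sided limits at a point $t_{0}$ via monotone convergence of the underlying events, the right-hand limit of $\phi_{u}$ equals $\min\{\mathbb{P}(s_{\Gamma}(u)\leq t_{0}),\,\mathbb{P}(s_{\Gamma}(u)>t_{0})\}$ and the left-hand limit equals $\min\{\mathbb{P}(s_{\Gamma}(u)<t_{0}),\,\mathbb{P}(s_{\Gamma}(u)\geq t_{0})\}$; these limits exist because $\min\colon\mathbb{R}^{2}\to\mathbb{R}$ is continuous, and each is clearly $\leq\phi_{u}(t_{0})$. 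Since $\limsup_{t\to t_{0}}\phi_{u}(t)$ is the larger of these two one-sided limits, upper semicontinuity at $t_{0}$ follows. Equivalently, whenever $t_{n}\to t_{0}$ one has $\limsup_{n}\phi_{u}(t_{n})\leq\phi_{u}(t_{0})$.

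It then remains to assemble these ingredients. Fix $\varepsilon>0$. By the infimum in \eqref{df} there is a direction $u_{0}\in\mathbb{S}^{p-1}$ with $\phi_{u_{0}}(s_{K}(u_{0}))<D_{CT}(K;\Gamma)+\varepsilon$. Since $D_{CT}(K_{n};\Gamma)=\inf_{u}\phi_{u}(s_{K_{n}}(u))\leq\phi_{u_{0}}(s_{K_{n}}(u_{0}))$, taking $\limsup_{n}$ and applying the upper semicontinuity of $\phi_{u_{0}}$ at $s_{K}(u_{0})$ together with $s_{K_{n}}(u_{0})\to s_{K}(u_{0})$ gives
$$\limsup_{n} D_{CT}(K_{n};\Gamma)\leq\limsup_{n}\phi_{u_{0}}(s_{K_{n}}(u_{0}))\leq\phi_{u_{0}}(s_{K}(u_{0}))<D_{CT}(K;\Gamma)+\varepsilon.$$
Letting $\varepsilon\downarrow 0$ yields $\limsup_{n}D_{CT}(K_{n};\Gamma)\leq D_{CT}(K;\Gamma)$, which is P5.

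The step I expect to require the most care is the interaction between the infimum over the sphere and the mere upper semicontinuity (rather than continuity) of the depth. One might fear that handling $\inf_{u}$ forces a uniform-in-$u$ semicontinuity statement, which is not available. The key observation that avoids this is that to bound $D_{CT}(K_{n};\Gamma)$ \emph{from above} it suffices to test against a single near-optimal direction $u_{0}$ chosen for $K$; at that fixed $u_{0}$ one only needs pointwise convergence $s_{K_{n}}(u_{0})\to s_{K}(u_{0})$ and pointwise upper semicontinuity of $\phi_{u_{0}}$ in $t$. The genuinely one-sided nature of the argument reflects that $\phi_{u}$ can jump downward at atoms of $s_{\Gamma}(u)$, which is precisely why only upper semicontinuity, and not continuity, holds.
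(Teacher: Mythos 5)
Your proof is correct, and it differs from the paper's argument in both of its main steps, so it is worth comparing. For the pointwise (fixed-$u$) step, the paper first assumes without loss of generality that the minimum in \eqref{expresionTukey2} is attained by the branch $\inf_{u}\mathbb{P}(s_{\Gamma}(u)\geq s_{K}(u))$, proves the event inclusion $\limsup_{n}\{s_{K_{n}}(u)\leq s_{\Gamma}(u)\}\subseteq\{s_{K}(u)\leq s_{\Gamma}(u)\}$, and then applies Fatou's lemma for probabilities of events; you instead keep the minimum intact inside $\phi_{u}(t)=HD(t;s_{\Gamma}(u))$ (cf.\ \eqref{Tukeyhalfspace}) and derive its upper semicontinuity in $t$ directly from the one-sided limits of the two monotone functions $t\mapsto\mathbb{P}(s_{\Gamma}(u)\leq t)$ and $t\mapsto\mathbb{P}(s_{\Gamma}(u)\geq t)$, so you need neither the choice of branch nor Fatou. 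For the passage to the infimum over directions, the paper takes infima on both sides of its pointwise bound and then relies on the interchange $\inf_{u}\inf_{n}\sup_{k\geq n}(\cdot)\geq\inf_{n}\sup_{k\geq n}\inf_{u}(\cdot)$ in \eqref{ec1TukeyCompacto}--\eqref{ec3TukeyCompacto}; you avoid any such interchange by testing against a single $\varepsilon$-optimal direction $u_{0}$ chosen for $K$, which is precisely the generic proof that an infimum of upper semicontinuous functions is upper semicontinuous (each $K\mapsto\phi_{u}(s_{K}(u))$ is upper semicontinuous since $K\mapsto s_{K}(u)$ is $1$-Lipschitz for $d_{\mathcal{H}}$ by \eqref{ecuacionHausdorff}). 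Both proofs consume the same metric input, namely pointwise convergence of the support functions, but yours is more modular: the univariate ingredient is the classical upper semicontinuity of the halfspace depth, the assembly is a standard soft argument, and the role of atoms of $s_{\Gamma}(u)$ as the only obstruction to continuity becomes explicit, whereas the paper's proof stays at the level of events at the price of the WLOG reduction and the inf/limsup bookkeeping.
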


\begin{proof}
	Let $\Gamma$ be a compact convex random set, $K\in\mathcal{K}_{c}(\mathbb{R}^{p})$ a set and let $\{K_{n}\}_{n}$ be a sequence of compact convex sets such that $$\lim_{n\rightarrow\infty}d_{\mathcal{H}}(K,K_{n}) = 0.$$ We need to prove
	$$\lim\sup_{n}D_{CT}(K_{n};\Gamma)\leq D_{CT}(K;\Gamma).$$
	By \eqref{ecuacionHausdorff},
	$$d_{\mathcal{H}}(K,K_{n}) = \sup_{u} |s_{K}(u) - s_{K_{n}}(u)|$$
	and then
	$$\lim_{n\rightarrow\infty}|s_{K}(u) - s_{K_{n}}(u)| = 0$$
	for each $u\in\mathbb{S}^{p-1}$. Thus $$\lim_{n\rightarrow\infty}s_{K_{n}}(u) = s_{K}(u)$$ for every $u\in\mathbb{S}^{p-1}$.
	Without loss of generality (the other case is analogous), assume $$D_{CT}(K;\Gamma) = \inf_{u}\mathbb{P}(s_{K}(u)\leq s_{\Gamma}(u)).$$
	Now, we prove that, for all $u\in\mathbb{S}^{p-1}$,
	\begin{equation}\nonumber
		U := \{\omega\in\Omega : \forall k\in\mathbb N, \exists n\geq k : \omega\in\{s_{K_{n}}(u)\leq s_{\Gamma}(u)\}\}\subseteq\{\omega\in\Omega : s_{K}(u)\leq s_{\Gamma(\omega)}(u)\}.
	\end{equation}
	Let $\omega\in U$. There exists a subsequence $\{K_{n'}\}_{n}$ of $\{K_{n}\}_{n}$ such that $$s_{K_{n'}}(u)\leq s_{\Gamma(\omega)}(u)$$ for all $n'$. Taking limits,
	$$s_{K}(u)=\lim_{n'\rightarrow\infty}s_{K_{n'}}(u) \leq s_{\Gamma(\omega)}(u),$$
	therefore
	$$\omega\in\{\omega\in\Omega : s_{K}(u)\leq s_{\Gamma}(u)\}.$$
	By definition, $U=\limsup_{n}\{s_{K_{n}}(u)\leq s_{\Gamma}(u)\}$. Thus
	\begin{equation}\label{ec1TukeyCompacto}
		\begin{aligned}
			D_{CT}(K;\Gamma) &= \mathbb{P}(s_{K}(u)\leq s_{\Gamma}(u))\geq\mathbb{P}(\limsup_{n}\{s_{K_{n}}(u)\leq s_{\Gamma}(u)\})\geq \\
			&\geq\limsup_{n\rightarrow\infty}\mathbb{P}(s_{K_{n}}(u)\leq s_{\Gamma}(u))
		\end{aligned}
	\end{equation}
	where the second inequality is due to the  Fatou's lemma. Taking the infimum in both sides yields
	\begin{equation}\label{ec2TukeyCompacto}
		\inf_{u}\mathbb{P}(s_{K}(u)\leq s_{\Gamma}(u))\geq\inf_{u}\lim\sup_{n\rightarrow\infty}\mathbb{P}(s_{K_{n}}(u)\leq s_{\Gamma}(u)).
	\end{equation}
	Since
	$$
	\lim\sup_{n}\mathbb{P}(s_{K_{n}}(u)\leq s_{\Gamma}(u)) = \inf_{n}\sup_{k\geq n}\mathbb{P}(s_{K_{k}}(u)\leq s_{\Gamma}(u)),
	$$
	it is clear that
	\begin{equation}\label{ec3TukeyCompacto}
		\begin{aligned}
			&\inf_{u}\inf_{n}\sup_{k\geq n}\mathbb{P}(s_{K_{k}}(u)\leq s_{\Gamma}(u))\geq \inf_{n}\sup_{k\geq n}\inf_{u}\mathbb{P}(s_{K_{k}}(u)\leq s_{\Gamma}(u)) = \\
			&= \lim\sup_{n\rightarrow\infty} \inf_{u}\mathbb{P}(s_{K}(u)\leq s_{\Gamma}(u))\geq \lim\sup_{n\rightarrow\infty} D_{CT}(K_{n};\Gamma).
		\end{aligned}
	\end{equation}
	By  \eqref{ec1TukeyCompacto}, \eqref{ec2TukeyCompacto} and \eqref{ec3TukeyCompacto},  $D_{CT}(\cdot;\Gamma)$ is upper semicontinuous.
\end{proof}

\subsection{Property VI. Consistency.
} Another desirable property for depth functions is that the sample version converges to the population counterpart (consistency).
This property is a particular case of the weak continuity (as a function of the distribution $P$) property of the axiomatic functional (metric) notion of depth \cite{NietoBattey} but it is not part of the axiomatic notions of multivariate and fuzzy depth. However, it is generally studied when an instance of depth function is introduced. To the best of our knowledge, the first time that appeared in the literature for depth functions was in \citet{LiuSimplicial}.

We propose the following property.

\begin{itemize}
	\item[\bf (P6.)]  \textit{ Let $\Gamma$ be a compact convex random set, $D(\cdot;\Gamma) : \mathcal{K}_{c}(\mathbb{R}^{p})\rightarrow [0,\infty)$ a function and $D_{n}(\cdot;\Gamma)\mathcal{K}_{c}(\mathbb{R}^{p})\rightarrow [0,\infty)$ its sample version. Then, $D$ and $D_{n}$ satsify
		$$
		\sup_{K\in\mathcal{K}_{c}(\mathbb{R}^{p})} |D(K;\Gamma) - D_{n}(K;\Gamma)| \longrightarrow 0 \text{, a.s. } [\mathbb{P}].
		$$
	}
\end{itemize}

This is a uniform consistency requirement which is satisfied by the Tukey depth but the uniformity may eventually have to be dropped for other depth functions.

\begin{theorem}\label{teoremaconsistencia}
	The function $D_{CT},$ with $D_{CT,n}$ in \eqref{n}, satisfies P6.
\end{theorem}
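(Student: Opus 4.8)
My plan is to reduce the uniform-in-$K$ deviation to a two-parameter uniform Glivenko--Cantelli statement over the generalized halfspaces, and then to prove the latter by a truncation and covering argument on the sphere. Put $F(u,t)=\mathbb{P}(s_{\Gamma}(u)\le t)$ and $G(u,t)=\mathbb{P}(s_{\Gamma}(u)\ge t)$, with empirical analogues $F_{n}(u,t)=\mathbb{P}_{n}^{u}((-\infty,t])$ and $G_{n}(u,t)=\mathbb{P}_{n}^{u}([t,\infty))$ from \eqref{n}. By \eqref{expresionTukey2} and \eqref{n1}, $D_{CT}(K;\Gamma)=\inf_{u}\min\{F(u,s_{K}(u)),G(u,s_{K}(u))\}$ and likewise for $D_{CT,n}$. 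Applying in turn $|\inf_{u}a_{u}-\inf_{u}b_{u}|\le\sup_{u}|a_{u}-b_{u}|$ and $|\min\{a,b\}-\min\{c,d\}|\le\max\{|a-c|,|b-d|\}$, and noting that for each fixed $u$ the value $s_{K}(u)$ ranges over all of $\mathbb{R}$ as $K$ varies, I obtain
\begin{equation}\nonumber
\sup_{K\in\mathcal{K}_{c}(\mathbb{R}^{p})}|D_{CT}(K;\Gamma)-D_{CT,n}(K;\Gamma)|\le\max\Big\{\sup_{u,t}|F_{n}(u,t)-F(u,t)|,\ \sup_{u,t}|G_{n}(u,t)-G(u,t)|\Big\},
\end{equation}
the suprema being over $(u,t)\in\mathbb{S}^{p-1}\times\mathbb{R}$. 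It therefore suffices to prove $\sup_{u,t}|F_{n}-F|\to0$ almost surely; the bound for $G$ is entirely analogous.

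For this two-parameter statement I would first dispose of the unboundedness in $t$. Since $\Gamma$ is compact almost surely, $\|\Gamma\|:=d_{\mathcal{H}}(\Gamma,\{0\})=\sup_{u}s_{\Gamma}(u)<\infty$ a.s., so given $\varepsilon>0$ one can fix $R$ with $\mathbb{P}(\|\Gamma\|>R)<\varepsilon$; because $s_{K}(u)\le d_{\mathcal{H}}(K,\{0\})$, for $t\ge R$ one has $F(u,t)\ge 1-\mathbb{P}(\|\Gamma\|>R)$ uniformly in $u$ (and symmetrically near $t=-R$), and the empirical counterpart is controlled by the strong law. On the compact range $\mathbb{S}^{p-1}\times[-R,R]$ I would use that, restricted to bodies with $\|K\|\le R$, the support function is Lipschitz in the direction, $|s_{K}(u)-s_{K}(u')|\le R\|u-u'\|$; covering $\mathbb{S}^{p-1}$ by finitely many caps of radius $\delta$ about $u_{1},\dots,u_{M}$ then lets me sandwich each $\mathbf{1}\{s_{K}(u)\le t\}$ between $\mathbf{1}\{s_{K}(u_{l})\le t-R\delta\}$ and $\mathbf{1}\{s_{K}(u_{l})\le t+R\delta\}$, up to the truncation event. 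Since for each fixed $u_{l}$ the classical univariate Glivenko--Cantelli theorem gives $\sup_{t}|F_{n}(u_{l},t)-F(u_{l},t)|\to0$ a.s., and there are finitely many $u_{l}$, this reduces the whole problem to controlling the oscillation $\sup_{t}\{F(u_{l},t+R\delta)-F(u_{l},t-R\delta)\}$ as $\delta\downarrow0$.

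The main obstacle is exactly this last oscillation, that is, genuine uniformity in the direction $u$. The estimate above is lossy at a jump of the law of $s_{\Gamma}(u_{l})$: the oscillation does not vanish with $\delta$ whenever $s_{\Gamma}(u_{l})$ has an atom, and this cannot be repaired by a distribution-free device because, unlike the multivariate case, the class of halfspaces $\{S_{u,t}^{-}\}$ is \emph{not} a Vapnik--Chervonenkis class (for $p\ge2$ one shatters $m$ sets for every $m$ by taking $m$ bodies whose support functions are a common ball radius plus small high-frequency perturbations realizing an arbitrary sign pattern at suitably chosen directions). Hence the argument must be tailored to the fixed law of $\Gamma$. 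The resolution I envisage is that the oscillation is harmful only through the atoms of the laws $s_{\Gamma}(u)$, and for such an atom the empirical and the population masses agree in the limit by the strong law, so their contributions \emph{cancel} in the difference $F_{n}-F$ rather than adding up; isolating these atoms and applying the covering estimate to the complementary, atomless part then yields $\sup_{u,t}|F_{n}-F|\to0$ a.s. I expect the careful bookkeeping of this cancellation, uniformly over the continuum of directions, to be the technical heart of the proof.
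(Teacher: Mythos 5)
Your opening reduction---bounding $\sup_{K}|D_{CT}(K;\Gamma)-D_{CT,n}(K;\Gamma)|$ by the two-parameter supremum $\max\bigl\{\sup_{u,t}|F_{n}-F|,\ \sup_{u,t}|G_{n}-G|\bigr\}$---is exactly the paper's first step. From there the routes differ: the paper applies the Dvoretzky--Kiefer--Wolfowitz inequality at each fixed $u$, asserts that since the exponential bound does not depend on $u$ it also controls the supremum over $u$ (giving convergence in probability), and then upgrades to almost sure convergence by viewing the supremum as an empirical process with bounded envelope and citing Gin\'e--Nickl, Corollary 3.7.9. You instead propose truncation in $t$, a finite covering of $\mathbb{S}^{p-1}$ exploiting the Lipschitz property of support functions, the univariate Glivenko--Cantelli theorem along the finitely many covering directions, and a final ``cancellation at atoms'' argument.

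The genuine gap is the step you yourself flag as the technical heart: the cancellation at atoms, \emph{uniformly over the continuum of directions}, is never proved---and it cannot be, because the statement you reduced to (and in fact the theorem's conclusion itself) fails for a general $\Gamma$ when $p\geq 2$. To see this, randomize your own non-VC construction. Take disjoint smooth bumps $\psi_{k}$ on $\mathbb{S}^{1}$ of height $1$ and width $2^{-k}$ centered at $w_{k}$, let $\xi_{k}$ be i.i.d.\ with $\mathbb{P}(\xi_{k}=1)=3/4$, and let $\Gamma$ have support function $1+\delta\sum_{k}4^{-k}\xi_{k}\psi_{k}$ with $\delta$ small (each realization satisfies the planar support-function condition $g+g''\geq 0$, and $\Gamma$ is measurable as a uniform limit of simple random sets). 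Let $K^{*}$ have support function $1+(\delta/2)\sum_{k}4^{-k}\psi_{k}$. Then $D_{CT}(K^{*};\Gamma)=\min\{1/4,3/4\}=1/4$, since at every direction in a bump the lower tail probability is $\mathbb{P}(\xi_{k}=0)=1/4$. However, for each $n$, the events $E_{k}=\{\xi^{(i)}_{k}=1 \text{ for all } i\leq n\}$ are independent in $k$ with $\mathbb{P}(E_{k})=(3/4)^{n}$, so by the second Borel--Cantelli lemma some $E_{k_{0}}$ occurs almost surely; at the direction $w_{k_{0}}$ every sample support value exceeds $s_{K^{*}}(w_{k_{0}})$, so $\mathbb{P}_{n}^{w_{k_{0}}}((-\infty,s_{K^{*}}(w_{k_{0}})])=0$ and hence $D_{CT,n}(K^{*};\Gamma)=0$ almost surely. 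Thus even pointwise consistency fails: the strong-law cancellation you invoke is valid at each fixed direction but is not uniform in the direction, which is precisely the uniformity your bookkeeping would need. Your argument can only be completed under extra assumptions on the law of $\Gamma$ (for instance $p=1$, or an equicontinuity condition of the type $\sup_{u,t}\mathbb{P}(s_{\Gamma}(u)\in[t-\eta,t+\eta])\rightarrow 0$ as $\eta\rightarrow 0$, which kills your oscillation term). I add that your diagnosis---that $\{S^{\pm}_{u,t}\}$ is not a VC class, so no distribution-free device is available---is correct and pinpoints exactly the inference that the paper's Step 1 passes over: a DKW bound holding for each fixed $u$ does not control the supremum over uncountably many $u$, as the same example shows. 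So while your proposal is not a proof, the obstruction it isolates is real and is not overcome by the paper's argument either.
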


\begin{proof}
	By measurability, we have that $s_{X_{1}}(u),\cdots , s_{X_{n}}(u)$ is a random sample of the random variable $s_{\Gamma}(u)$ for all $u\in\mathbb{S}^{p-1}$. Let us fix $K\in\mathcal{K}_{c}(\mathbb{R}^{p})$. 
	To ease the notation, let us denote $$F(s_{K}(u)):=\{\mathbb{P}(s_{\Gamma}(u)\leq s_{K}(u)), \mathbb{P}(s_{\Gamma}(u)\geq s_{K}(u))\},$$
	$$F_n(s_{K}(u)):=\{\mathbb{P}_{n}^{u}((-\infty, s_{K}(u)]) , \mathbb{P}_{n}^{u}([s_{K}(u), \infty))\}.$$
	By \eqref{df} and \eqref{n1} and basic properties of the supremum and infimum functions, we have that
	\begin{equation}
		\begin{aligned}\nonumber
			|D_{CT}(K;\Gamma) - D_{CT,n}(K;\Gamma)| &= |\inf_{u\in\mathbb{S}^{p-1}} \min F(s_{K}(u))- \inf_{u\in\mathbb{S}^{p-1}} \min F_n(s_{K}(u))|  \\ \nonumber
			& \leq
			\sup_{u\in\mathbb{S}^{p-1}} |\min F(s_{K}(u)) - \min F_n(s_{K}(u))|.
		\end{aligned}
	\end{equation}
	{\it Step 1.} Setting
	\begin{equation}
		\begin{aligned}\nonumber
			F^+(t,u)&:=\mathbb{P}(s_{\Gamma}(u)\leq t)
			\\ \nonumber
			F^-(t,u)&:=\mathbb{P}(s_{\Gamma}(u)\geq t)
			\\ \nonumber
			F_n^+(t,u)&:=\mathbb{P}_{n}^{u}((-\infty, t])
			\\ \nonumber
			F_n^-(t,u)&:=\mathbb{P}_{n}^{u}([t, \infty))
		\end{aligned}
	\end{equation}
	and applying again basic properties, we obtain
	
	\begin{equation}
		\begin{aligned}
			|D_{CT}(K;\Gamma) - D_{CT,n}(K;\Gamma)| \leq
			&\sup_{u\in\mathbb{S}^{p-1}} \max\{|F^+(s_{K}(u),u) - F^+_n(s_{K}(u),u)|,\\ \nonumber
			&\hspace{1.8cm} |F^-(s_{K}(u),u) -F^-_n(s_{K}(u),u)|\}.
		\end{aligned}
	\end{equation}
	Then
	\begin{equation}
		\begin{aligned}\nonumber
			\sup_{K\in\mathcal{K}_{c}(\mathbb{R}^{p})} & |D_{CT}(K;\Gamma) - D_{CT,n}(K;\Gamma)|\leq 	 \\ \nonumber
			&\leq \sup_{K\in\mathcal{K}_{c}(\mathbb{R}^{p})} \sup_{u\in\mathbb{S}^{p-1}} \max\{|F^+(s_{K}(u),u) - F^+_n(s_{K}(u),u)|, \\ \nonumber
			& \hspace{3.7cm} |F^-(s_{K}(u),u) -F^-_n(s_{K}(u),u)|\}
			\\ \nonumber
			&\leq \sup_{u\in\mathbb{S}^{p-1}}\sup_{t\in \mathbb{R}} \max\{|F^+(t,u) - F^+_n(t,u)|, |F^-(t,u) -F^-_n(t,u)|\}.
		\end{aligned}
	\end{equation}
	The Dvoretzky--Kiefer--Wolfowitz inequality \cite[Corollary 1]{Massart} gives, for each $u\in\mathbb{S}^{p-1}$ and $\varepsilon>0$,
	$$P(\sup_{t\in \mathbb{R}} |F^+(t,u) - F^+_n(t,u)|>\varepsilon)\le 2\exp\{-2\varepsilon^2n\}$$
	and there easily follows
	$$P(\sup_{t\in \mathbb{R}} |F^-(t,u) - F^-_n(t,u)|>\varepsilon)\le 2\exp\{-2\varepsilon^2n\}.$$
	Since the bound is independent of $u$, that implies
	$$P(\sup_{u\in\mathbb{S}^{p-1}}\sup_{t\in \mathbb{R}} \max\{|F^+(t,u) - F^+_n(t,u)|, |F^-(t,u) -F^-_n(t,u)|\}>\varepsilon)\le 4\exp\{-2\varepsilon^2n\}$$
	which, by the arbitrariness of $\varepsilon$, establishes
	$$\sup_{u\in\mathbb{S}^{p-1}}\sup_{t\in \mathbb{R}} \max\{|F^+(t,u) - F^+_n(t,u)|, |F^-(t,u) -F^-_n(t,u)|\}\to 0$$
	in probability.
	
	{\it Step 2.} To prove almost sure convergence, we rewrite the supremum in terms of an empirical process. Taking
	$$\mathcal F=\{\phi_{t,u}^+,\phi_{t,u}^-\mid (t,u)\in\mathbb{R}\times \mathbb{S}^{p-1}\}$$
	where $\phi_{t,u}^+,\phi_{t,u}^-:\Omega\to\mathbb{R}$ are given by
	$$\phi_{t,u}^+(\omega)=I_{(-\infty,t](s_{\Gamma(\omega)})},\quad \phi_{t,u}^-(\omega)=I_{[t,\infty)(s_{\Gamma(\omega)})},$$
	we have
	$$\sup_{u\in\mathbb{S}^{p-1}}\sup_{t\in \mathbb{R}} \max\{|F^+(t,u) - F^+_n(t,u)|, |F^-(t,u) -F^-_n(t,u)|\}=\sup_{\phi\in\mathcal F}|E_{P_n}(\phi)-E_P(\phi)|$$
	where $P_n$ is the empirical distribution.
	By \cite[Corollary 3.7.9]{Nickl}, that supremum converges to 0 almost surely because it does so in probability (which was proved in Step 1) and the family $\mathcal F$ has a $P$-integrable measurable envelope, which is obvious since all functions in $\mathcal F$ take on values in $[0,1]$. Accordingly, also
	$$
	\sup_{K\in\mathcal{K}_{c}(\mathbb{R}^{p})} |D_{CT}(K;\Gamma) - D_{CT,n}(K;\Gamma)|\longrightarrow 0 \text{, a.s. } [\mathbb{P}].
	$$
\end{proof}

\subsection{Property VII. Convexity of the contours.
}
This property is not part of any of the existing axiomatic notions of statistical depth. However, it is commonly studied in the literature since it first appeared in \citet{Donoho}. In addition, \citet{Ser}, which focuses on multivariate properties,  lists it as a desirable property.

The set $\mathcal{K}_{c}(\mathbb{R}^{p})$ is endowed with the operations sum and product by a scalar. Thus, given $U\subseteq\mathcal{K}_{c}(\mathbb{R}^{p})$, we can say that $U$ is a convex set if $$(1-\lambda)\cdot K + \lambda\cdot L\in U$$ for every pair of sets $K,L\in U$ and for all $\lambda\in [0,1]$.
We propose the following property.

\begin{itemize}
	\item[\bf (P7.)]  \textit{ Let $\Gamma$ be a compact convex random set and  $D(\cdot;\Gamma) : \mathcal{K}_{c}(\mathbb{R}^{p})\rightarrow [0,\infty)$ a function. Then, the set
		$$
		D_{\alpha} := \{K\in\mathcal{K}_{c}(\mathbb{R}^{p}) : D(K;\Gamma)\geq\alpha\}\subseteq\mathcal{K}_{c}(\mathbb{R}^{p})
		$$
		is convex for every $\alpha\in [0,1]$.
	}
\end{itemize}

The next result states that the function $D_{CT}$ satisfies the above property, that is, the $\alpha$-contours of $D_{CT}$ are convex subsets of $\mathcal{K}_{c}(\mathbb{R}^{p})$.

\begin{theorem}\label{teoremaconvexidad}
	The function $D_{CT}$ satisfies P7.
\end{theorem}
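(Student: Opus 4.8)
The plan is to reduce the convexity of the depth contours to a one-dimensional quasi-concavity statement, exploiting the representation \eqref{Tukeyhalfspace} of $D_{CT}$ as an infimum over $u\in\mathbb{S}^{p-1}$ of univariate halfspace depths, together with the linearity of the support function under the set operations, namely $s_{(1-\lambda)\cdot K + \lambda\cdot L}(u) = (1-\lambda)s_{K}(u) + \lambda s_{L}(u)$, which was already used in the proof of Proposition \ref{TukeyP3a}.

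First I would record the key one-dimensional fact: for any real random variable $Y$, the map $t\mapsto HD(t;Y) = \min\{\mathbb{P}(Y\leq t),\mathbb{P}(Y\geq t)\}$ is quasi-concave, in the sense that $HD((1-\lambda)a + \lambda b;Y)\geq \min\{HD(a;Y),HD(b;Y)\}$ for all $a,b\in\mathbb{R}$ and $\lambda\in[0,1]$. This follows purely from monotonicity: assuming without loss of generality $a\leq b$, so that $c := (1-\lambda)a + \lambda b\in[a,b]$, the cumulative distribution function is non-decreasing, giving $\mathbb{P}(Y\leq c)\geq \mathbb{P}(Y\leq a)\geq HD(a;Y)$, while the survival function is non-increasing, giving $\mathbb{P}(Y\geq c)\geq \mathbb{P}(Y\geq b)\geq HD(b;Y)$; taking the minimum of these two lower bounds yields the claim.

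Then the main argument is immediate. Fix $\alpha\in[0,1]$ and $K,L\in D_{\alpha}$, and let $\lambda\in[0,1]$. Since $D_{CT}(K;\Gamma) = \inf_{u} HD(s_{K}(u);s_{\Gamma}(u))\geq\alpha$, we have $HD(s_{K}(u);s_{\Gamma}(u))\geq\alpha$ for every $u$, and likewise for $L$. Applying the quasi-concavity fact to the random variable $Y = s_{\Gamma}(u)$ with $a = s_{K}(u)$ and $b = s_{L}(u)$, and using the linearity of the support function to identify $(1-\lambda)s_{K}(u) + \lambda s_{L}(u)$ with $s_{(1-\lambda)\cdot K + \lambda\cdot L}(u)$, I obtain $HD(s_{(1-\lambda)\cdot K + \lambda\cdot L}(u);s_{\Gamma}(u))\geq \min\{HD(s_{K}(u);s_{\Gamma}(u)),HD(s_{L}(u);s_{\Gamma}(u))\}\geq\alpha$ for each $u\in\mathbb{S}^{p-1}$. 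Taking the infimum over $u$ and invoking \eqref{Tukeyhalfspace} gives $D_{CT}((1-\lambda)\cdot K + \lambda\cdot L;\Gamma)\geq\alpha$, so the convex combination lies in $D_{\alpha}$, which establishes convexity.

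There is essentially no hard step here; the single ingredient doing the work is the one-dimensional quasi-concavity, and the only thing to be careful about is to phrase it in a self-contained way. The passage to the infimum over $u$ is harmless because the pointwise lower bound $\geq\alpha$ holds in every direction $u$ with the same $\lambda$. If one preferred to avoid stating the fact as a separate lemma, the two monotonicity inequalities for $\mathbb{P}(s_{\Gamma}(u)\leq\,\cdot\,)$ and $\mathbb{P}(s_{\Gamma}(u)\geq\,\cdot\,)$ could be inlined directly into the displayed chain of inequalities.
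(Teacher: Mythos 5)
Your proof is correct, and it takes a genuinely different route from the paper's. The paper proves P7 by repeating the machinery of Proposition \ref{TukeyP3a}: starting from the halfspace form \eqref{definicionTukeyCompacto}, it decomposes the index set $\mathbb{K}$ of pairs $(u,t)$ with $(1-\lambda)\cdot K+\lambda\cdot L\in S_{u,t}^{-}$ into the three disjoint pieces $\mathbb{K}_{1},\mathbb{K}_{2},\mathbb{K}_{3}$ according to which of $K$, $L$ lies in $S_{u,t}^{-}$, bounds the infimum of $\mathbb{P}(\Gamma\in S_{u,t}^{-})$ over each piece from below by $D_{CT}(K;\Gamma)\geq\alpha$ or $D_{CT}(L;\Gamma)\geq\alpha$ via \eqref{ecuacion2Tukey3}, and then treats $S_{u,t}^{+}$ analogously. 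You instead start from the directional representation \eqref{Tukeyhalfspace}, isolate the quasi-concavity of the univariate halfspace depth $t\mapsto HD(t;Y)$ as a standalone lemma (correctly proved from monotonicity of $\mathbb{P}(Y\leq\cdot)$ and $\mathbb{P}(Y\geq\cdot)$; your reduction to $a\leq b$ is legitimate by the symmetry $(a,b,\lambda)\mapsto(b,a,1-\lambda)$), and apply it in each direction $u$ with $Y=s_{\Gamma}(u)$, using $s_{(1-\lambda)\cdot K+\lambda\cdot L}(u)=(1-\lambda)s_{K}(u)+\lambda s_{L}(u)$. Both arguments ultimately rest on the same two ingredients, linearity of the support function and monotonicity of distribution and survival functions, but yours is shorter and more modular: your lemma in fact yields the stronger statement that $D_{CT}(\cdot;\Gamma)$ is quasi-concave, i.e. $D_{CT}((1-\lambda)\cdot K+\lambda\cdot L;\Gamma)\geq\min\{D_{CT}(K;\Gamma),D_{CT}(L;\Gamma)\}$, from which both P7 and Proposition \ref{TukeyP3a} (P3a) follow at once. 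What the paper's longer case analysis buys is that it stays at the level of the halfspaces $S_{u,t}^{\pm}$ and reuses \eqref{ecuacion2Tukey3} verbatim, keeping the proof formally parallel to the earlier one rather than passing through the per-direction reduction.
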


\begin{proof}
	Let us fix $\alpha\in [0,1]$, $K,L\in D_\alpha$ and $\lambda\in [0,1]$. The aim is to prove
	$$(1-\lambda)\cdot K + \lambda\cdot L\in D_\alpha.$$ For that, we follow the same idea of the proof of Proposition \ref{TukeyP3a}. By the definition of Tukey depth,
	$$
	D_{CT}((1-\lambda)\cdot K + \lambda\cdot L;\Gamma) = \min\{\inf_{\underset{(1-\lambda)\cdot K + \lambda\cdot L\in S_{u,t}^{-}}{u\in\mathbb{S}^{p-1}, t\in\mathbb{R} :}} \mathbb{P}(\Gamma\in S_{u,t}^{-}),\inf_{\underset{(1-\lambda)\cdot K + \lambda\cdot L\in S_{u,t}^{+}}{u\in\mathbb{S}^{p-1}, t\in\mathbb{R} :}} \mathbb{P}(\Gamma\in S_{u,t}^{+})\}.
	$$
	We now prove that
	$$\inf_{\underset{(1-\lambda)\cdot K + \lambda\cdot L\in S_{u,t}^{-}}{u\in\mathbb{S}^{p-1}, t\in\mathbb{R} :}} \mathbb{P}(\Gamma\in S_{u,t}^{-})\geq\alpha.
	$$
	As in the proof of Proposition \ref{TukeyP3a}, we define the following sets
	\begin{equation}
		\begin{aligned}\nonumber
			\mathbb{K} &:= \{(u,t)\in\mathbb{S}^{p-1}\times\mathbb{R} : (1-\lambda)\cdot K + \lambda\cdot K\in S_{u,t}^{-}\},\\ \nonumber
			\mathbb{K}_{1} &:= \{(u,t)\in\mathbb{S}^{p-1}\times\mathbb{R} : K, L\in S_{u,t}^{-}, L\in S_{u,t}^{-}\}, \\ \nonumber
			\mathbb{K}_{2} &:= \{(u,t)\in\mathbb{S}^{p-1}\times\mathbb{R} : K\in S_{u,t}^{-}, L\not\in S_{u,t}^{-}, (1-\lambda)\cdot K + \lambda\cdot L\in S_{u,t}^{-}\}, \\ \nonumber
			\mathbb{K}_{3} &:= \{(u,t)\in\mathbb{S}^{p-1}\times\mathbb{R} : K\not\in S_{u,t}^{-}, L\in S_{u,t}^{-}, (1-\lambda)\cdot K + \lambda\cdot L\in S_{u,t}^{-}\}.
		\end{aligned}
	\end{equation}
	It is clear that
	$$
	\inf_{(u,t)\in\mathbb{K}}\mathbb{P}(\Gamma\in S_{u,t}^{-}) = \min\{\inf_{(u,t)\in\mathbb{K}_{1}}\mathbb{P}(\Gamma\in S_{u,t}^{-}),\inf_{(u,t)\in\mathbb{K}_{2}}\mathbb{P}(\Gamma\in S_{u,t}^{-}),\inf_{(u,t)\in\mathbb{K}_{3}}\mathbb{P}(\Gamma\in S_{u,t}^{-}) \}.
	$$
	Taking into account  \eqref{ecuacion2Tukey3} and the fact that $D_{CT}(K;\Gamma),D_{CT}(L;\Gamma)\geq\alpha$, we have that
	$$
	\inf_{(u,t)\in\mathbb{K}_{i}}\mathbb{P}(\Gamma\in S_{u,t}^{-})\geq \alpha
	$$
	for every $i\in\{1,2,3\}$. The case with $S_{u,t}^{+}$ is done analogously. Thus, $$D_{CT}((1-\lambda)\cdot K + \lambda\cdot L;\Gamma)\geq\alpha$$ and $D_{CT}(\cdot ;\Gamma)_{\alpha}$ is a convex set.
\end{proof}

\section{Discussion}\label{remarks}
Considering the properties studied in the literature for depth functions, we propose nine different properties for depth functions with respect to  compact convex random sets. They are:
\begin{itemize}
	\item P1. Affine invariance,
	\item P2. Maximality at the center of symmetry,
	\item P3a. Monotonicity with respect to the center in an algebraic way,
	\item P3b. Monotonicity with respect to the center in relation to the associated distance (in a geometric way),
	\item P4a. Vanishing at infinity in an algebraic way,
	\item P4b. Vanishing at infinity in a geometric way,
	\item P5. Upper semicontinuity,
	\item P6. Consistency, and
	\item P7. Convexity of the contours.
\end{itemize}
It is clear that all of them are desirable properties for depth function of compact convex sets. However, not all of them have to be part of an axiomatic definition. For instance, it seems appropriate to have either P3a. and P4a. or P3b. and P4b. At the same time, P7., although important,  does not belong to any of the existing axiomatic definitions and  P5. and a general case of P6. only belong to the functional (metric) axiomatic definition of statistical depth.

Taking all of this into account, we propose to consider:
\begin{itemize}
	\item The \emph{algebraic} depth of compact convex sets, when properties P1., P2., P3a. and P4a. are satisfied.
	\item The  \emph{restricted algebraic} depth  of compact convex sets, when properties P1., P2., P3a., P4a. P5., P6. and P7. are satisfied.
	\item The \emph{geometric} depth  of compact convex sets, when properties P1., P2., P3b. and P4b. are satisfied.
	\item The  \emph{restricted geometric} depth  of compact convex sets, when properties P1., P2., P3b., P4b. P5., P6. and P7. are satisfied.
\end{itemize}
Note that the algebraic depth can be considered as an adaptation of the notions of multivariate depth and of semilinear fuzzy depth. Meanwhile, the geometric depth can be seen as a conversion of the geometric fuzzy depth  and the restricted geometric depth as a modification of the functional (metric) depth.

We have studied the satisfaction of the above properties for the Tukey depth of compact convex sets, which is an adaptation to this setting of the multivariate Tukey depth and a simplification of the Tukey for fuzzy sets. It happens that this depth function satisfies all of these properties but for P3b., for which we have provided a counterexample. Thus, the Tukey depth of compact convex sets is a restricted algebraic depth and, in particular, an algebraic depth. However, it is not a geometric depth and, consequently, neither a restricted geometric depth.

\citet{Cascos} proposed a notion of depth for random closed sets. They require properties P1, P5 (for the Fell topology instead of the Hausdorff metric) and the property that a degenerate random set should assign depth 1 to its only value and 0 to any other random set. Admitting unbounded sets as values leads to some defining properties of depth being hard to adapt, a situation they solve by opting for a minimal list of properties. It is worth mentioning that, in the case of compact convex values, convergence in the Fell topology and in the Hausdorff metric are equivalent \cite[Corollary 3A]{Salinetti}. Hence both upper semicontinuity requirements are equivalent for the Tukey depth and Proposition \ref{muj} provides a proof of upper semicontinuity with respect to the Fell topology. Such a proof is missing in \cite{Cascos} on the grounds of it being `easy' (a direct proof without invoking extra facts does not seem to be that easy).

{\bf Acknowledgments}
A. Nieto-Reyes and L.Gonzalez are supported by the Spanish Ministerio de Ciencia, Innovaci\'on y Universidades grant MTM2017-86061-C2-2-P. P. Ter\'an is supported by the Ministerio de Econom\'\i a y Competitividad grant MTM2015-63971-P, the
Ministerio de Ciencia, Innovación y Universidades grant PID2019-104486GB-I00 and the
Consejer\'\i a de Empleo, Industria y Turismo del Principado de Asturias grant GRUPIN-IDI2018-000132.


\begin{thebibliography}{999}
		
		\bibitem[Gil et al.(2002)]{GilBlood}
		Gil, M.\'A., Lubiano, M.A., Montenegro, M. and L\'opez, M.T. Least squares fitting of an affine function and strength of association for interval-valued data. {\em Metrika} {\bf 2002}, {\em 56(2)}, 97--111.
		
		\bibitem[Lima Neto and de Carvalho(2017)]{soccer}
		Lima Neta, E. de A. and de Carvalho, F. de A.T. Nonlinear regression applied to interval-valued data. {\em Patt. Anal. App.} {\bf 2017}, {\em 20}, 809--824.
		
		
		\bibitem[Molchanov(2017)]{molchanov}
		Molchanov, I. \textit{Theory of Random Sets}, 2nd ed.;Springer, London, 2017.
		
		\bibitem[Arstein and Vitale(1975)]{arstein}
		Artstein, Z. and Vitale, R.A. A strong law of large numbers for random compact convex sets. {\em Ann. Probab.} {\bf 1975}, 879--828.
		
		\bibitem[Gonz\'alez-Rodr\'iguez et al.(2007)]{gonzalezcolubi}
		Gonz\'alez-Rodr\'iguez, G., Blanco, A., Corral, N. and Colubi, A. Least squares estimation of linear regression models for convex compact convex random sets. {\em Adv. Data Anal. Classif.} {\bf 2007}, {\em 1(1)}, 67--81.
		
		\bibitem[Sinova et al.(2010)]{sinovamedian}
		Sinova, B., Casals, M.R., Colubi, A. and Gil, M.\'A. The median of a random interval. In: Combining Soft Computing and Statistical Methods in Data Analysis, Springer, Berlin, Heidelberg, 2010; pp. 575--583.
		
		\bibitem[Richey and Sarkar (2022)]{intersect}
		Richey, J.; Sarkar, A. Intersections of random sets. {\em J. Appl. Probab.} {\bf 2022}, {\em 59(1)}, 131--151.
		
		\bibitem[Shi et al.(2022)]{Pen}
		Shi, P.; Lu, L.; Fan, X.; Xin, Y.; Ni, J.  A novel underwater sonar image enhancement algorithm based on approximation spaces of random sets. {\em Multimed. Tools. Appl.}  {\bf 2022}, {\em 81(4)},  4569--4584.
		
		\bibitem[Jörnsten(2004)]{clustering}
		Jörnsten, R. Clustering and classification based on the L1 data depth. {\em J. Multivariate Anal.} {\bf 2004}, {\em 90(1)}, 67--89.
		
		\bibitem[Nieto-Reyes et al.(2021)]{aplicacion1}
		Nieto-Reyes,A.;Battey, H.; Francisci, G. Functional Symmetry and Statistical Depth for the Analysis of Movement Patterns in Alzheimer's Patients. {\em Mathematics} {\bf 2021}, {\em 9(8)}, 820.
		
		\bibitem[Nieto-Reyes et al.(2021)]{aplicacion2}
		Nieto-Reyes, A.;Duque, R.;Francisci, G. A Method to Automate the Prediction of Student Academic Performance from Early Stages of the Course. {\em Mathematics} {\bf 2021}, {\em 9(21)}, 2677.
		
		\bibitem[Liu(1990)]{LiuSimplicial}
		Liu, R.Y. On a notion of data depth based on random simplices. {\em Ann. Statist.} {\bf 1990}, {\em 18}, 405--414.
		
		\bibitem[Zuo and Serfling(2000)]{ZuoSerfling}
		Zuo, Y.; Serfling, R. General notions of statistical depth function. {\em Ann. Statist.} {\bf 2000}, {\em 28}, 461--482.
		
		\bibitem[Nieto-Reyes and Battey(2021)]{NietoBatteyJMVA}
		Nieto-Reyes, A.;  Battey, H. A topologically valid construction of depth for functional data. {\em Journal of Multivariate Analysis} {\bf 2021}, {\em 184}, 104738. 
		
		\bibitem[Gonz\'alez-De La Fuente et al.(202Xa)]{primerarticulo}
		G\'onzalez-de la Fuente, L.; Nieto-Reyes, A.; Ter\'an, P. Statistical depth for fuzzy sets. {\em Fuzzy Sets and Systems}, to appear. https://doi.org/10.1016/j.fss.2021.09.015
		
		\bibitem[Nieto-Reyes and Battey(2016)]{NietoBattey}
		Nieto-Reyes, A.; Battey, H. A topologically valid definition of depth for functional data. {\em Statist. Sci.} {\bf 2016}, {\em 31}, 61--79.
		
		\bibitem[Gonz\'alez-De La Fuente et al.(202Xb)]{CaptFN}
		G\'onzalez-de la Fuente, L.; Nieto-Reyes, A.; Ter\'an, P. Two notions of depth in the fuzzy setting.  In L. García-Escudero, A. Gordaliza, A. Mayo, M.A. Lubiano Gomez, M.A. Gil, P. Grzegorzewski, O. Hryniewicz (Eds.), Building Bridges between Soft and Statistical Methodologies for Data Science, Springer, Chapter 30, to appear.
		
		\bibitem[Tukey(1975)]{tukey}
		Tukey, J.W. Mathematics and Picturing Data. In: Proceedings of the International Congress of Mathematicians, Vancouver, BC, Canada, 21–29 August 1974; Canadian Mathematical Congress: Montreal, QC, Canada, 1975; pp. 523–-531.
		
		\bibitem[Serfling(2002)]{SerflingSpatial}
		Serfling, R. A depth function and a scale curve based on spatial quantiles. In Y.Dodge (Ed.) Statistical Data Analysis Based on $L_{1}$-norm and Related Methods , Birkhäuser, Basel, 2002, 25--38.
		
		\bibitem[Cuesta-Albertos and Nieto-Reyes(2008)]{CuestaNieto}
		Cuesta-Albertos, J.A.; Nieto-Reyes, A. The random Tukey depth. {\em Comput. Statist. Data Anal.} {\bf 2008}, {\em 52}, 4979--4988.
		
		\bibitem[Chakraborty and Chaudhuri(2014)]{SpatIndInf}
		Chakraborty, A.; Chaudhuri, P. The spatial distribution in infinite dimensional spaces and related quantiles and depths. {\em The Annals of Statistics} {\bf  2014}, {\em 42(3)}, 1203--1231.
		
		\bibitem[Cuesta-Albertos and Nieto-Reyes(2010)]{CuestaNietoMieres}
		Cuesta-Albertos, J.A.; Nieto-Reyes, A.
		Functional classification and the random Tukey depth. Practical issues.
		In C. Borgelt, G. González-Rodríguez, W. Trutsching, M.A. Lubiano, M.A. Gil, P. Grzegorzewski, O. Hryniewicz (Eds.), Combining Soft Computing and Statistical Methods in Data Analysis, vol. 77, Springer, Berlin 2010, pp. 123-130
		
		\bibitem[Gonz\'alez-De La Fuente et al.(202Xc)]{CaptFT}
		G\'onzalez-de la Fuente, L.; Nieto-Reyes, A.; Ter\'an, P. Tukey depth for fuzzy sets.  In L. García-Escudero, A. Gordaliza, A. Mayo, M.A. Lubiano Gomez, M.A. Gil, P. Grzegorzewski, O. Hryniewicz (Eds.), Building Bridges between Soft and Statistical Methodologies for Data Science, Springer, to appear.
		
		\bibitem[Cascos et al.(2021)]{Cascos}
		Cascos, I.; Li, Q.; Molchanov, I. Depth and outliers for samples of sets and random sets distributions. {\em Aust. N. Z. Stat.} {\bf 2021}, {\em 63}, 55--82.
		
		\bibitem[Matheron(1975)]{randomsetsdefinition}
		Matheron, G. \textit{Random sets and integral geometry}; Wiley, New York, 1975.
		
		\bibitem[Himmelberg(1974)]{randomsetstheorem}
		Himmelberg, C. Measurable relations. {\em Fund. Math.} {\bf 1974}, {\em 87}, 53--72.
		
		\bibitem[Bonnensen and Fenchel(1948)]{Bonnensen}
		Bonnensen, T.; Fenchel, W. \textit{Theorie der Konvexen Korper}; Chelsea, New York, 1948.
		
		\bibitem[Zadeh(1975)]{extesion}
		Zadeh, L.A. The concept of a linguistic variable and its application to approximate reasoning, Part 1, {\em Inform. Sci.} {\bf 1975}, {\em 8}, 199--249, Part 2, {\em Inform. Sci.} {\bf 1975}, {\em 8}, 301--353, Part 3, {\em Inform. Sci.} {\bf 1975}, {\em 8}, 43--80.
		
		\bibitem[Gruber and Lettl(1980)]{Gruber}
		Gruber, P. M.; Lettl, G. Isometries of the Space of Convex Bodies in Euclidean Space. Bulletin of the London Mathematical Society 12, 455--462.
		
		\bibitem[Vitale(1985)]{Vitale}
		Vitale, R. A. $L_p$ metrics for compact, convex sets. J. Approx. Theory 45, 280--287.
		
		
		
		\bibitem[Massart(1990)]{Massart}
		Massart, P. The tight constant in the Dvoretzky--Kiefer--Wolfowitz inequality. Ann. Probab. 18, 1269--1283.
		
		\bibitem[Gin\'e and Nickl(2016)]{Nickl}
		Gin\'e, E.; Nickl, R. Mathematical foundations of infinite-dimensional statistical models. Cambridge University Press, Cambridge, 2016.
		
		
		\bibitem[Donoho and Gasko(1992)]{Donoho}
		Donoho, D.L.; Gasko, M. Breakdown properties of location estimates based on halfspace depth and projected outlyinges. {\em Annals of Statistics} {\bf 1992}, {\em 20(4)}, 1803--1827.
		
		\bibitem[Serfling(2003)]{Ser}
		Serfling, R. Depth Functions in Nonparametric Multivariate Inference. In: Data Depth: Robust Multivariate     Analysis, Computational Geometry and Applications, DIMACS Series in Discrete Mathematics and Theoretical Computer Science, 2003.
		
		
		\bibitem[Salinetti and Wets(1979)]{Salinetti} Salinetti, G.; Wets, R. J. B. On the convergence of sequences of convex sets in finite dimensions. SIAM Review 21, 18--33.
		
		
		
	\end{thebibliography}
\end{document}